\newtheorem{theorem}{Theorem}
\newtheorem*{theorem*}{Theorem}
\newtheorem{prop}[theorem]{Proposition}
\newtheorem{corollary}[theorem]{Corollary}
\theoremstyle{definition}
\newtheorem{definition}{Definition}
\newtheorem{example}{Example}
\newtheorem*{remark*}{Remark}
\DeclareMathOperator{\Ext}{Ext}
\DeclareMathOperator{\Tor}{Tor}
\DeclareMathOperator{\VdB}{V\!dB}
\DeclareMathOperator{\rk}{rk}
\newcommand*{\Ptens}{\mathop{\widehat\otimes}}
\newcommand*{\ptens}[1]{\mathop{\widehat\otimes}_{#1}}
\newcommand*{\tens}[1]{\mathop{\otimes}_{#1}}
\newcommand*{\Tens}{\mathop{\otimes}}
\newcommand*{\lar}{\leftarrow}
\newcommand*{\xla}{\xleftarrow}
\newcommand*{\xra}{\xrightarrow}
\newcommand*{\h}{\mathbf h}
\newcommand*{\lmod}{\mbox{-}\!\mathop{\mathsf{mod}}}
\newcommand*{\rmod}{\mathop{\mathsf{mod}}\!\mbox{-}}
\newcommand*{\bimod}{\mbox{-}\!\mathop{\mathsf{mod}}\!\mbox{-}}
\renewcommand*{\dh}{\mathop{\mathrm{dh}}}
\newcommand*{\db}{\mathop{\mathrm{db}}}
\newcommand*{\dg}{\mathop{\mathrm{dg}}}
\newcommand*{\wdh}{\mathop{\mathrm{w.dh}}}
\newcommand*{\wdb}{\mathop{\mathrm{w.db}}}
\newcommand*{\wdg}{\mathop{\mathrm{w.dg}}}
\newcommand*{\CC}{\mathbb C}
\newcommand*{\R}{\mathbb R}
\newcommand*{\N}{\mathbb N}
\newcommand*{\Z}{\mathbb Z}
\newcommand*{\TT}{\mathbb T}
\newcommand*{\bq}{\mathbf q}
\newcommand*{\cO}{\mathscr O}
\newcommand*{\cA}{\mathscr A}
\newcommand*{\cH}{\mathscr H}
\newcommand*{\cL}{\mathscr L}
\newcommand*{\cE}{\mathscr E}
\newcommand*{\eps}{\varepsilon}
\newcommand*{\reg}{\mathrm{reg}}
\newcommand*{\hol}{\mathrm{hol}}
\begin{document}
\title[Homological dimensions of quantum tori]{Homological dimensions
of smooth and\\ complex analytic quantum tori}
\author{A. Yu. Pirkovskii}
\address{Department of Nonlinear Analysis and Optimization\\
Faculty of Science\\
Peoples' Friendship University of Russia\\
Mikluho-Maklaya 6\\
117198 Moscow\\
Russia}
\email{pirkosha@sci.pfu.edu.ru, pirkosha@online.ru}
\thanks{Partially supported by the RFBR grant 08-01-00867.}
\subjclass[2000]{Primary 46M18, 16E10; Secondary 18G25, 46H25.}
\keywords{quantum torus, nuclear Fr\'echet algebra, (weak) global dimension,
(weak) bidimension}
\date{}

\begin{abstract}
We survey some results on homological dimensions
of the algebraic, complex analytic, and smooth quantum tori.
Our main theorem states, in particular, that
the smooth and the complex analytic quantum $n$-tori
have global dimension $n$.
This contrasts with the result of McConnell and Pettit (1988) who proved that,
in the generic case, the algebraic quantum $n$-torus has global dimension $1$.
In this connection we also formulate some general theorems
on homological dimensions of nuclear Fr\'echet algebras.
\end{abstract}

\maketitle

\section{Introduction}
By a quantum torus one usually means an associative algebra which is, in a sense,
a ``noncommutative deformation'' of a function algebra on the $n$-torus.
The simplest example of a quantum torus is the algebra
$\cA_q$ generated by two invertibles $x,y$ subject to the relation
\begin{equation}
\label{q}
xy=qyx
\end{equation}
where $q$ is a nonzero scalar.
If $q=1$ then $\cA_q$ is just the algebra of Laurent polynomials in two variables
or, equivalently, the algebra of regular functions on the
algebraic $2$-torus $(\CC^\times)^2$, where $\CC^\times=\CC\setminus\{ 0\}$.

Relations like \eqref{q} naturally arise in quantum mechanics and go
back to H.~Weyl \cite{Weyl}; sometimes they are referred to as
``the canonical commutation relations in Weyl's form''.
The study of algebraic properties of $\cA_q$ was apparently initiated
by Wedderburn \cite{Wedd}.
Let us note that $\cA_q$ can be obtained from the Laurent polynomial algebra
$\cA_1$ via deformation quantization \cite{Rief_tori}.
From this perspective, $\cA_1$ becomes the ``classical limit''
of $\cA_q$ as $q\to 1$.

Similarly one defines other quantum tori, which are noncommutative
analogues of the algebras of regular, holomorphic, smooth, continuous,
and $L^\infty$-functions on the $n$-torus. Quantum tori play an important r\^ole in
noncommutative geometry \cite{Connes_book,Manin_topics} and
in the quantum group theory \cite{Br_Good,Lev_Soib}. They also naturally appear
in problems of quantum physics (quantum Hall effect \cite{Connes_book}, matrix models
in string theory \cite{CDS}, etc.); see also \cite{Rief-Schw,Weaver} and references therein.

Thus there are at least five natural versions of quantum tori:

\begin{itemize}
\item the {\em algebraic quantum torus}, which is a noncommutative analogue
of the algebra of Laurent polynomials in $n$ variables;
\item the {\em complex analytic quantum torus}, which is a noncommutative analogue
of the algebra of holomorphic functions on the complex algebraic
$n$-torus $(\CC^\times)^n$;
\item the {\em smooth quantum torus}, which is a noncommutative analogue
of the algebra of smooth functions on the real $n$-torus $\TT^n$;
\item the {\em topological quantum torus}, which is a noncommutative analogue
of the algebra of continuous functions on $\TT^n$;
\item the {\em measurable quantum torus}, which is a noncommutative analogue
of the algebra of $L^\infty$-functions on $\TT^n$.
\end{itemize}

Our goal is to present some results on homological dimensions
of the algebraic, complex analytic, and smooth quantum tori. The
results on the algebraic quantum torus are mostly due to
McConnell and Pettit \cite{McConnPett} and Brookes \cite{Brookes},
while the results on the complex analytic and smooth quantum tori are
due to the author. Before formulating the results, let us give the
definitions of the above-mentioned quantum tori.

\section{Preliminaries}
We will work over the field of complex numbers $\CC$.
All algebras are assumed to be associative and unital.

\subsection{The algebraic quantum torus}
Fix a complex $n\times n$-matrix $\bq=(q_{ij})_{1\le i,j\le n}$
such that $q_{ij}=q_{ji}^{-1}$ for all $i,j=1,\ldots ,n$.

\begin{definition}
{\em The algebraic quantum $n$-torus} is the algebra $\cO_\bq^\reg((\CC^\times)^n)$
with generators $x_1^{\pm 1},\ldots ,x_n^{\pm 1}$ and relations
\[
x_i x_i^{-1}=x_i^{-1}x_i=1,\quad x_i x_j=q_{ij} x_j x_i \quad (i,j=1,\ldots ,n).
\]
\end{definition}

In the commutative case (i.e., in the case where $q_{ij}=1$
for all $i,j$), $\cO_\bq^\reg((\CC^\times)^n)$ is just the algebra of Laurent polynomials
in $n$ variables, or, equivalently, the algebra
$\cO^\reg((\CC^\times)^n)$ of regular (in the sense of algebraic geometry)
functions on the algebraic $n$-torus $(\CC^\times)^n$.
In the general case, although $\cO_\bq^\reg((\CC^\times)^n)$ is clearly noncommutative,
one can easily show that the monomials
$x^\alpha=x_1^{\alpha_1}\cdots x_n^{\alpha_n}$ (where $\alpha_i\in\Z$)
form a basis of $\cO_\bq^\reg((\CC^\times)^n)$, so the underlying vector space
of $\cO_\bq^\reg((\CC^\times)^n)$ is still the space of Laurent polynomials.
Thus $\cO_\bq^\reg((\CC^\times)^n)$ can be viewed as the Laurent polynomial algebra
with a deformed multiplication. As we said above,
the study of the algebraic quantum torus was initiated by Wedderburn \cite{Wedd}
in the case $n=2$; for the general case, see \cite{McConnPett,Manin_topics}.

\subsection{The complex analytic quantum torus}
Let $\cO^\hol((\CC^\times)^n)$ denote the space of holomorphic functions
on $(\CC^\times)^n$ endowed with the topology of compact convergence.
Clearly, $\cO_\bq^\reg((\CC^\times)^n)$ is a dense subspace of
$\cO^\hol((\CC^\times)^n)$. It is natural to ask whether we can ``deform''
the usual pointwise multiplication on $\cO^\hol((\CC^\times)^n)$
in such a way that $\cO_\bq^\reg((\CC^\times)^n)$ become a subalgebra
of $\cO^\hol((\CC^\times)^n)$.
It is easy to see that the answer is positive provided that
$|q_{ij}|=1$ for all $i,j$. Indeed, identifying each function
$f\in \cO^\hol((\CC^\times)^n)$ with its Laurent expansion at $0$,
we get an isomorphism of topological vector spaces
\[
\cO^\hol((\CC^\times)^n)\cong\Bigl\{ a=\sum_{\alpha\in\Z^n} c_\alpha x^\alpha :
\| a\|_\rho=\sum_{\alpha\in\Z^n} |c_\alpha| \rho^{|\alpha|}<\infty\;\forall\rho>0\Bigr\}.
\]
Thus the standard topology on $\cO^\hol((\CC^\times)^n)$ is identical to the topology
determined by
the seminorms $\{\|\cdot\|_\rho : \rho>0\}$.
Now an easy computation shows that if $|q_{ij}|=1$, then the multiplication
on $\cO_\bq^\reg((\CC^\times)^n)$ is continuous with respect to the above family of seminorms,
and hence it uniquely extends by continuity to $\cO^\hol((\CC^\times)^n)$.
As a result, we get a new multiplication on
$\cO^\hol((\CC^\times)^n)$ making it into a topological algebra.

\begin{definition}[\cite{Pir_qfree}]
The algebra $\cO^\hol((\CC^\times)^n)$ endowed with the above multiplication
is called the {\em complex analytic quantum $n$-torus} and is denoted by
$\cO_\bq^\hol((\CC^\times)^n)$.
\end{definition}

\begin{remark*}
In \cite{Pir_qfree} we have shown that
$\cO_\bq^\hol((\CC^\times)^n)$ is the {\em Arens--Michael envelope}
of $\cO_\bq^\reg((\CC^\times)^n)$, i.e., the completion of
$\cO_\bq^\reg((\CC^\times)^n)$ with respect to the family of all submultiplicative seminorms.
Note that if $|q_{ij}|\ne 1$ for some $i,j$, then the Arens--Michael envelope of
$\cO_\bq^\reg((\CC^\times)^n)$ is zero (loc. cit.).
\end{remark*}

\subsection{The smooth quantum torus}
Consider the space $C^\infty(\TT^n)$ of smooth functions on the real $n$-torus $\TT^n$.
Recall that the standard topology on $C^\infty(\TT^n)$ is the topology of uniform
convergence of all derivatives.
The restriction map
\[
\cO^\hol((\CC^\times)^n)\to C^\infty(\TT^n),\quad f\mapsto f|_{\TT^n},
\]
is known to be injective and to have dense range.
Therefore $\cO_\bq^\hol((\CC^\times)^n)$ becomes a dense subspace of
$C^\infty(\TT^n)$. As above, it is easily seen that the usual pointwise
multiplication on $C^\infty(\TT^n)$ can be ``deformed'' in such a way that
$\cO_\bq^\hol((\CC^\times)^n)$ become a subalgebra of $C^\infty(\TT^n)$.
Indeed, identifying each function $f\in C^\infty(\TT^n)$
with its Fourier expansion, we get an isomorphism of topological vector spaces
\[
C^\infty(\TT^n)\cong
\Bigl\{ a=\sum_{\alpha\in\Z^n} c_\alpha x^\alpha :
\| a\|_k=\sum_{\alpha\in\Z^n} |c_\alpha| |\alpha|^k<\infty\;\forall k\in\Z_+\Bigr\}.
\]
Thus the standard topology on $\cO^\hol((\CC^\times)^n)$ is identical to the topology
determined by the seminorms $\{\|\cdot\|_k : k\in\Z_+\}$.
Now an easy computation shows that the multiplication
on $\cO_\bq^\hol((\CC^\times)^n)$ is continuous with respect to the above family of seminorms,
and hence it uniquely extends by continuity to $C^\infty(\TT^n)$.
As a result, we get a new multiplication on
$C^\infty(\TT^n)$ making it into a topological algebra.

\begin{definition}[M. Rieffel, \cite{Rief_tori_1}]
The algebra $C^\infty(\TT^n)$ endowed with the above multiplication is called
the {\em smooth quantum $n$-torus} and is denoted by $C^\infty_\bq(\TT^n)$.
\end{definition}

\subsection{The topological quantum torus}

\begin{definition}[G. Elliott, \cite{Ell_tori}]
The {\em topological quantum $n$-torus} is the universal $C^*$-algebra $C_\bq(\TT^n)$
generated by $n$ unitaries $u_1,\ldots ,u_n$ subject to the relations
$u_i u_j=q_{ij} u_j u_i\; (i,j=1,\ldots ,n)$.
\end{definition}

If $q_{ij}=1$ for all $i,j$, then $C_\bq(\TT^n)$ is isometrically $*$-isomorphic
to the algebra $C(\TT^n)$ of continuous functions on $\TT^n$.
Note that if $n=2$, then $C_\bq(\TT^n)$ is the {\em rotation algebra}
introduced by M.~Rieffel \cite{Rief_irr}.

\subsection{The measurable quantum torus}
Let $(\theta_{kl})_{1\le k,l\le n}$ be a real skew-symmetric matrix such that
$q_{kl}=\exp(2\pi i\theta_{kl})$ for all $k,l$. In what follows we identify
$\TT$ with $\R/\Z$ in the standard way. For each $k=1,\ldots ,n$, define
a unitary operator $U_k$ on $L^2(\TT^n)$ by
\[
(U_k f)(x_1,\ldots ,x_n)=\exp(2\pi i x_k) f\Bigl(x_1+\frac{\theta_{k1}}{2},\ldots ,
x_n+\frac{\theta_{kn}}{2}\Bigr) \qquad (f\in L^2(\TT^n)).
\]
An easy computation shows that $U_k U_l=q_{kl} U_l U_k$ for all $k,l$.
Therefore there exists a unique $*$-representation $\pi$ of $C_\bq(\TT^n)$ on
$L^2(\TT^n)$ such that $\pi(u_k)=U_k\; (k=1,\ldots ,n)$.

\begin{definition}[N. Weaver, \cite{Weaver}]
The weak operator closure of $\mathop{\mathrm{Im}}\pi$ is called
the {\em measurable quantum torus} and is denoted by $L^\infty_\bq(\TT^n)$.
\end{definition}

It is clear from the above definition that if $q_{kl}=1$ for all $k,l$, then
$L^\infty_\bq(\TT^n)$ is isomorphic to $L^\infty(\TT^n)$.

In summary, for every complex $n\times n$-matrix $\bq=(q_{ij})$
satisfying $q_{ij}=q_{ji}^{-1}$ and $|q_{ij}|=1$, we have a chain of algebras
\[
\cO_\bq^\reg((\CC^\times)^n)\subset\cO_\bq^\hol((\CC^\times)^n)\subset
C^\infty_\bq(\TT^n)\subset C_\bq(\TT^n)\subset L^\infty_\bq(\TT^n).
\]
Below we will concentrate mostly on the complex analytic quantum
torus $\cO_\bq^\hol((\CC^\times)^n)$
and on the smooth quantum torus $C^\infty_\bq(\TT^n)$.
In this connection, we will also recall some related results
on the algebraic quantum torus
$\cO_\bq^\reg((\CC^\times)^n)$, obtained by McConnell and Pettit \cite{McConnPett}
and Brookes \cite{Brookes}.

Unfortunately, the results we are going to present do not extend to
the topological quantum torus $C_\bq(\TT^n)$
and to the measurable quantum torus $L^\infty_\bq(\TT^n)$. The main difficulty
in studying homological properties of $A=C_\bq(\TT^n)$
and $A=L^\infty_\bq(\TT^n)$ is that the
completed projective tensor product $A\Ptens A$
is a rather complicated Banach space.
Take, for instance, the simplest situation $n=1$, in which case
$C_\bq(\TT^n)$ is just the algebra $C(\TT)$ of continuous functions on the circle.
It is known that $C(\TT)\Ptens C(\TT)$ is a proper subspace of $C(\TT^2)$,
and that the projective tensor norm on $C(\TT)\Ptens C(\TT)$ is strictly
stronger than the uniform norm inherited from $C(\TT^2)$.
Moreover, given a function $f\in C(\TT^2)$, there is no effective way
to determine whether or not $f$ belongs to $C(\TT)\Ptens C(\TT)$,
and even if it does, then there is no effective way to compute its
projective tensor norm. A similar problem occurs with $L^\infty(\TT)$.
In contrast, spaces of smooth functions
behave well under the projective tensor product in the sense that,
given smooth manifolds $M$ and $N$, there is a topological isomorphism
$C^\infty(M)\Ptens C^\infty(N)\cong C^\infty(M\times N)$.
A similar property holds for spaces of holomorphic functions on complex
manifolds (see \cite{Groth} for details).

\section{Homological dimensions}
Homological dimensions of associative algebras can be defined in
at least two different settings. The first one is the classical
homological algebra of Cartan and Eilenberg \cite{CE}, i.e.,
homological algebra in categories of modules over rings.
The second one is a version of homological algebra in categories of functional
analysis, specifically in categories of locally convex topological modules
over locally convex topological algebras. This theory, also known as
{\em topological homology}, was developed in the early 1970ies by
Helemskii (see, e.g., \cite{X_dg}) in the special case of Banach algebras.
A few years later a similar theory was independently discovered
by Kiehl and Verdier \cite{KV} and by Taylor \cite{T1}
in the context of more general topological algebras.
Let us briefly recall the basics of this theory.
For details, we refer to Helemskii's monograph \cite{X1}.

To be definite, we will work only with Fr\'echet modules over Fr\'echet algebras.
Recall that a {\em Fr\'echet algebra} is an algebra $A$ endowed with a topology
making $A$ into a Fr\'echet space (i.e., a complete, metrizable locally convex space)
in such a way that the multiplication $A\times A\to A$ is continuous.
A {\em left Fr\'echet $A$-module} is a left $A$-module $X$ endowed with
a Fr\'echet space topology in such a way that the action $A\times X\to X$
is continuous.
Left Fr\'echet $A$-modules and their continuous morphisms form a category
denoted by $A\lmod$. Given $X,Y\in A\lmod$,
the space of morphisms from $X$ to $Y$ will be denoted by $\h_A(X,Y)$.
The categories $\rmod A$ and $A\bimod A$ of right
Fr\'echet $A$-modules and of Fr\'echet $A$-bimodules are defined similarly.

The basic constructions of topological homology
mostly parallel their classical counterparts from \cite{CE}.
However, there is a crucial difference stemming from the fact that
the categories of Fr\'echet modules are not abelian.
The difference is that, instead of considering arbitrary exact sequences
of $A$-modules, one should restrict to those sequences which are
``admissible'' in the following sense.
An exact sequence of Fr\'echet modules is {\em admissible} if
it splits in the category of topological vector spaces, i.e., if it has
a contracting homotopy consisting of continuous linear maps.
By using admissible sequences instead of arbitrary exact sequences,
one can adapt most basic notions of the classical homological algebra
to the context of Fr\'echet modules.
For example, a left Fr\'echet $A$-module $P$ is {\em projective} if the functor
$\h_A(P,-)$ is exact in the sense that it takes admissible sequences
of Fr\'echet $A$-modules to exact sequences of vector spaces.
A left Fr\'echet $A$-module $F$ is {\em flat} if the projective tensor product functor
$(-)\ptens{A} F$ (see \cite{X1}) is exact in the same sense as above.
It is known that every projective Fr\'echet module is flat.

A {\em resolution} of $X\in A\lmod$ is a pair $(P_\bullet,\eps)$
consisting of a nonnegative chain complex
$P_\bullet$ in $A\lmod$ and a morphism $\eps\colon P_0\to X$ making the sequence
$P_\bullet\xra{\eps} X\to 0$ into an admissible complex.
If all the $P_i$'s are projective (respectively, flat), then
$(P_\bullet,\eps)$ is called a {\em projective resolution}
(respectively, a {\em flat resolution}) of $X$.
It is a standard fact that $A\lmod$ has {\em enough projectives},
i.e., each left Fr\'echet $A$-module has a projective resolution.
The same is true of $\rmod A$ and $A\bimod A$.

By using the above fact, we may define derived functors on $A\lmod$,
in particular, the functors $\Ext$ and $\Tor$.
Let $X$ be a left Fr\'echet $A$-module, and let $P_\bullet\to X$ be a projective
resolution of $X$. Given $Y\in A\lmod$, the $n$th cohomology of the cochain complex
$\h_A(P_\bullet,X)$ is denoted by $\Ext^n_A(X,Y)$. Similarly, if
$Y\in\rmod A$, then the $n$th homology of the chain complex
$Y\ptens{A} P_\bullet$ is denoted by $\Tor_n^A(Y,X)$.
The spaces $\Ext$ and $\Tor$ do not depend on the choice of the projective
resolution $P_\bullet$ because all projective resolutions of $X$
are homotopy equivalent.

An important special case of $\Tor$ and $\Ext$ is Hochschild homology
and cohomology. Given a Fr\'echet $A$-bimodule $X$, the space
$\Ext^n_{A-A}(A,X)$ (here ``$A-A$'' means that we are dealing with the
$\Ext$ functor on $A\bimod A$) is called the {\em $n$th Hochschild cohomology}
of $A$ with coefficients in $X$ and is denoted by $\cH^n(A,X)$.
Similarly, the {\em $n$th Hochschild homology}
of $A$ with coefficients in $X$ is the space
$\cH_n(A,X)=\Tor_n^{A-A}(X,A)$.

Let $X\in A\lmod$. The {\em projective homological dimension} of $X$,
denoted by $\dh_A X$,
is the least integer $n\in\Z_+$ such that $X$ has a projective resolution of the form
\[
0 \lar X\lar P_0\lar P_1\lar \cdots \lar P_n\lar 0.
\]
If there is no such $n$, one sets $\dh_A X=\infty$.
If we replace the words ``projective resolution'' by ``flat resolution'',
then we get the definition of the
{\em weak homological dimension} of $X$, denoted $\wdh_A X$.
Clearly, we have $\dh_A X=0$ (respectively, $\wdh_A X=0$)
if and only if $X$ is projective (respectively, flat).
Since every projective module is flat, we clearly have $\wdh_A X\le\dh_A X$.
The projective dimension of $X$ can also be defined as the least integer
$n\in\Z_+$ such that $\Ext^{n+1}_A(X,Y)=0$
for all $Y\in A\lmod$. Similarly, the weak dimension of $X$ is the
least integer $n\in\Z_+$ such that $\Tor_{n+1}^A(Y,X)=0$
and $\Tor_n^A(Y,X)$ is Hausdorff for all $Y\in\rmod A$.

Given a Fr\'echet algebra $A$, the {\em global dimension} and
the {\em weak global dimension} of $A$
are defined by
\begin{align*}
\dg A&=\sup\{ \dh\nolimits_A X \,|\, X\in A\lmod\},\\
\wdg A&=\sup\{ \wdh\nolimits_A X \,|\, X\in A\lmod\}.
\end{align*}
The {\em bidimension} and the {\em weak bidimension} of
$A$ are defined by
\begin{align}
\label{db_H}
\db A&=\dh\nolimits_{A-A} A
=\min\{ n\in\Z_+ \mid \cH^{n+1}(A,M)=0\quad\forall\, M\in A\bimod A\},\\
\notag
\wdb A&=\wdh\nolimits_{A-A} A
=\min\left\{
n\in\Z_+ \,\left| \;
\parbox{42mm}{$\cH_{n+1}(A,M)=0$ and\\ $\cH_n(A,M)$
is Hausdorff}\;\forall\, M\in A\bimod A\right.
\right\}.
\end{align}
We clearly have $\wdg A\le\dg A$ and $\wdb A\le\db A$.
It is also true (but less obvious) that $\dg A\le\db A$ and $\wdg A\le\wdb A$.

Apart from the functional-analytic version of homological algebra
that we have just described, we will also use its
purely algebraic prototype, i.e.,
the Cartan--Eilenberg homological algebra in categories of modules over
algebras not endowed with any topology. Recall that, in order to define
homological dimensions in the purely algebraic setting,
we should repeat the above definitions with admissible sequences replaced by exact sequences
and the completed projective tensor product,
$\ptens{A}$, replaced by the algebraic tensor product, $\tens{A}$.
Also, the conditions that certain $\Tor$-spaces are required to be Hausdorff
(see the above definitions of $\wdh$ and $\wdb$) are now meaningless
and should be omitted. The reason why these conditions are essential
in the Fr\'echet algebra setting stems from the fact that
the functor $\ptens{A}$ is not right exact (in contrast to
the functor $\tens{A}$), and so $\ptens{A}$ is not isomorphic in general
to the derived functor $\Tor_0$.

In what follows, when dealing with homological dimensions of
the quantum tori, we will consider the complex analytic and the smooth quantum tori
as Fr\'echet algebras, while the algebraic quantum torus will be
considered as ``just an algebra''. Thus, for example, the symbol
``$\dg$'' will have different meanings when applied
to $\cO_\bq^\reg((\CC^\times)^n)$ and to $\cO_\bq^\hol((\CC^\times)^n)$.
We hope that this will not lead to confusion.

It turns out that the bidimensions of the quantum tori is much easier
to compute than their global dimensions.
The reason is that
the Hochschild homology and cohomology of the quantum tori satisfy
a relation resembling the classical Poincar\'e isomorphism
in the topology of manifolds.
This relation was first systematically studied by
M.~Van den Bergh \cite{VdB}, so we call it {\em Van den Bergh's condition}.

\section{Algebras satisfying Van den Bergh's condition}

Let $A$ be a Fr\'echet algebra. A bimodule $U\in A\bimod A$
is said to be \emph{invertible} if there exists a bimodule
$U^{-1}\in A\bimod A$ such that
\[
U\ptens{A} U^{-1}\cong U^{-1}\ptens{A} U\cong A
\]
as Fr\'echet $A$-bimodules.

Here is an example of an invertible bimodule. Let $\alpha$ be an automorphism
of $A$. Denote by $A_\alpha$ the Fr\'echet space $A$ with
an $A$-bimodule structure given by
\[
a\cdot b=ab,\quad b\cdot a=b\alpha(a)\quad (a\in A,\; b\in A_\alpha).
\]
It is easy to check that $A_\alpha$ is invertible and that
$A_\alpha^{-1}=A_{\alpha^{-1}}$.

\begin{definition}
We say that $A$ satisfies {\em Van den Bergh's condition
$\VdB(n)$} (where $n\in\N$) if there exists an invertible bimodule
$U\in A\bimod A$ such that
\begin{equation}
\label{VdB}
\cH^i(A,X)\cong \cH_{n-i}(A,U\ptens{A} X)\quad\text{for all } X\in A\bimod A.
\end{equation}
The bimodule $U$ will be called a {\em twisting bimodule}.
\end{definition}

Of course, a similar definition (with $\ptens{A}$ replaced by $\tens{A}$)
makes sense for algebras not endowed with any topology.
In this context, the above condition was introduced and studied by
M.~Van den Bergh \cite{VdB}.
In the setting of Fr\'echet algebras, Van den Bergh's condition
was first used presumably by the author \cite{Pir_Nova}.

\begin{prop}
\label{prop:VdB_db}
If $A$ satisfies $\VdB(n)$, then $\db A=n$.
\end{prop}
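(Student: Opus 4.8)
The plan is to prove both inequalities $\db A \le n$ and $\db A \ge n$ using the isomorphism \eqref{VdB} together with the characterization of $\db A$ in terms of vanishing of Hochschild cohomology from \eqref{db_H}.

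First I would prove $\db A \le n$. By definition, $\db A = \dh_{A-A} A$, so it suffices to show that $\cH^{m}(A, X) = 0$ for all $m > n$ and all $X \in A\bimod A$. Fix such an $m$ and an $X$. Applying $\VdB(n)$ with the twisting bimodule $U$, we get $\cH^m(A, X) \cong \cH_{n-m}(A, U \ptens{A} X)$. Since $m > n$, we have $n - m < 0$, and Hochschild homology vanishes in negative degrees by definition (the chain complex computing $\cH_\bullet$ is concentrated in nonnegative degrees). Hence $\cH^m(A, X) = 0$, which gives $\db A \le n$.

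Next I would prove $\db A \ge n$, i.e.\ that $\cH^n(A, X) \ne 0$ for at least one bimodule $X$. The natural candidate is $X = U^{-1}$, where $U^{-1}$ is the inverse of the twisting bimodule. Indeed, by \eqref{VdB} applied to $X = U^{-1}$ we obtain
\[
\cH^n(A, U^{-1}) \cong \cH_0(A, U \ptens{A} U^{-1}) \cong \cH_0(A, A),
\]
using the defining property $U \ptens{A} U^{-1} \cong A$ of an invertible bimodule. Now $\cH_0(A, A)$ is the zeroth Hochschild homology of $A$ with coefficients in $A$ itself, which is $A / [\overline{A,A}]$ (the quotient of $A$ by the closure of the linear span of commutators); since $A$ is unital, this is nonzero — at the very least the class of the identity $1$ is nonzero, as the augmentation-type quotient map $A \to A/\overline{[A,A]}$ is surjective and $A \ne 0$. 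Therefore $\cH^n(A, U^{-1}) \ne 0$, so $\db A \ge n$. Combining the two inequalities yields $\db A = n$.

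The main obstacle I anticipate is the subtle point in the lower bound: one must be sure that $\cH_0(A,A) \ne 0$. In the purely algebraic setting $\cH_0(A,A) = A/[A,A]$ and unitality makes nonvanishing immediate; in the Fr\'echet setting one works with the closure $\overline{[A,A]}$, and one needs that $1 \notin \overline{[A,A]}$. For the quantum tori this will follow from the existence of a continuous trace (the "constant-term" functional $a = \sum c_\alpha x^\alpha \mapsto c_0$ is a continuous character-like trace vanishing on commutators but not on $1$), but for the general statement of Proposition~\ref{prop:VdB_db} one should either invoke a general fact that a nonzero unital Fr\'echet algebra has $\cH_0(A,A) \ne 0$, or observe that $\cH_0(A,A)$ is a quotient of the nonzero space $A$ by a subspace not containing $1$ whenever $A$ admits any nonzero continuous trace. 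I would include a brief remark to this effect, but the heart of the argument is the dimension-shifting via \eqref{VdB}.
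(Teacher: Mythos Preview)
Your upper bound $\db A\le n$ is correct and is exactly the paper's argument.

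The lower bound, however, has a genuine gap. You reduce to showing $\cH_0(A,A)=A/\overline{[A,A]}\ne 0$, and your justification is essentially ``$A$ is unital and nonzero''. That implication is false: for the Banach algebra $B(H)$ of bounded operators on an infinite-dimensional Hilbert space one has $1\in [B(H),B(H)]$ (e.g.\ write $1=P_1+P_2$ for two infinite-rank, infinite-corank projections; each $P_i$ is a single commutator by Brown--Pearcy), so $\cH_0(B(H),B(H))=0$ despite $B(H)$ being unital. Your parenthetical ``the quotient map is surjective and $A\ne 0$'' is a non sequitur, and your fallback (existence of a continuous trace) is exactly the thing in doubt and is not supplied by $\VdB(n)$. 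So as written the argument does not establish $\db A\ge n$ at the stated generality.

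The paper avoids the problem by a small but decisive change of test bimodule. Instead of $X=U^{-1}$, take $X=U^{-1}\Ptens A$ (projective tensor over $\CC$, left action via $U^{-1}$, right action via $A$). Then \eqref{VdB} gives
\[
\cH^n(A,\,U^{-1}\Ptens A)\;\cong\;\cH_0\bigl(A,\,U\ptens{A}(U^{-1}\Ptens A)\bigr)\;\cong\;\cH_0(A,\,A\Ptens A),
\]
and one now invokes the standard identification $\cH_i(A,\,X\Ptens Y)\cong\Tor_i^A(Y,X)$ (for $X\in A\lmod$, $Y\in\rmod A$). With $i=0$ and $X=Y=A$ this yields
\[
\cH_0(A,\,A\Ptens A)\;\cong\;\Tor_0^A(A,A)\;\cong\;A\ptens{A}A\;\cong\;A\;\ne\;0,
\]
which needs no trace and no hypothesis beyond $A\ne 0$. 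The moral: aim the Van~den~Bergh duality at $\Tor_0^A(A,A)$ rather than at $\cH_0(A,A)$; the former is always $A$, the latter need not survive.
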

\begin{proof}
Since $\cH_i\equiv 0$ for all $i<0$,
condition \eqref{VdB} implies that $\cH^i(A,X)=0$ for all $i>n$ and all
$X\in A\bimod A$. This means exactly that $\db A\le n$ (see \eqref{db_H}).
On the other hand, it is known that for each Fr\'echet algebra $A$
and each $X\in A\lmod$, $Y\in \rmod A$ there is an isomorphism
$\cH_i(A,X\Ptens Y)\cong\Tor_i^A(Y,X)$. Therefore,
\[
\cH^n(A,U^{-1}\Ptens A)\cong \cH_0(A,A\Ptens A)\cong \Tor_0^A(A,A)\cong A\ne 0,
\]
which shows that $\db A=n$.
\end{proof}

Here are some examples of algebras satisfying $\VdB(n)$.

\begin{example}
The polynomial algebra $A=\CC[x_1,\ldots ,x_n]$ satisfies
$\VdB(n)$ (in the purely algebraic sense) with $U=A$.
This was first observed apparently by J.~L.~Taylor \cite{T2}
and easily follows from the fact that $A$
has a bimodule Koszul resolution.
\end{example}

\begin{example}
\label{example:Taylor}
The algebra $C^\infty(D)$ of smooth functions on an open subset
$D\subset\R^n$ and the algebra $\cO^\hol(D)$ of holomorphic functions
on a polydomain $D\subset\CC^n$ satisfy $\VdB(n)$
(as Fr\'echet algebras) with $U=A$.
This was proved by Taylor \cite{T2} by using bimodule Koszul resolutions,
like in the previous example. Of course, the main point here is that
the Koszul resolution is not only exact, but is also admissible.
Note that if $D\subset\CC^n$ is a domain of holomorphy, then the
bimodule Koszul resolution of $\cO(D)$ is still exact \cite{T2},
but the question of whether it is admissible seems to be open.
\end{example}

\begin{example}
The algebra $C^\infty(M)$ of smooth functions on a real manifold $M$
satisfies $\VdB(n)$ with $n=\dim(M)$ and $U=T^n(M)$, the module
of smooth $n$-polyvector fields on $M$ \cite{Pir_Nova}.
\end{example}

\begin{example}
A similar result holds for the algebra of regular functions and for
the algebra of holomorphic functions on a nonsingular affine algebraic
variety \cite{Pir_Mall}. It is tempting to conjecture that
the algebra of holomorphic functions on any Stein manifold $M$
satisfies Van den Bergh's condition, but this question is open
even in the case where $M$ is a domain of holomorphy in $\CC^n$.
\end{example}

For a number of other examples (in the purely algebraic context),
see Van den Bergh's paper \cite{VdB}. In particular, he shows that
condition $\VdB(n)$ holds for many Koszul algebras and for many ``almost commutative''
filtered algebras, such as, for example, the universal enveloping algebra
of a finite-dimensional Lie algebra.

We will see below that the algebras
$\cO_\bq^\reg((\CC^\times)^n)$, $\cO_\bq^\hol((\CC^\times)^n)$, and
$C^\infty_\bq(\TT^n)$ also satisfy $\VdB(n)$.
To this end, it is convenient to use ``quantized'' versions of
bimodule Koszul resolutions.

\section{Bimodule Koszul resolutions and the bidimensions of the quantum tori}

Let $A$ denote any of the algebras $\cO_\bq^\reg((\CC^\times)^n)$,
$\cO_\bq^\hol((\CC^\times)^n)$, or $C^\infty_\bq(\TT^n)$.
For each $p=0,\ldots ,n$, consider the $A$-bimodule
$K_p=A\Tens\bigwedge^p\CC^n\Tens A$, where $\Tens$ stands for the usual tensor product
(over $\CC$) in the case where $A=\cO_\bq^\reg((\CC^\times)^n)$
and for the completed projective tensor product in the case where
$A=\cO^\hol_\bq((\CC^\times)^n)$ or $A=C^\infty_\bq(\TT^n)$.
Fix a basis $e_1,\ldots ,e_n$ in $\CC^n$ and consider the chain complex
\begin{equation}
\label{Kosz_tor}
0 \lar A \xla{\mu_A} K_0 \xla{d} K_1 \xla{d} \cdots
\xla{d} K_n \lar 0,
\end{equation}
where $\mu_A\colon K_0=A\Tens A\to A$ is the multiplication on $A$, and the
differential $d\colon K_p\to K_{p-1}$ is given by
\begin{align*}
d(a\otimes e_{i_1}\ldots e_{i_p}\otimes b)
=\sum_{k=1}^p (-1)^{k-1}
&\biggl(\Bigl(\prod_{s<k} q_{i_s i_k}\Bigr) ax_{i_k}
\otimes e_{i_1}\ldots\hat e_{i_k}\ldots e_{i_p}\otimes b\\
&-\Bigl(\prod_{s>k} q_{i_k i_s}\Bigr) a\otimes e_{i_1}\ldots\hat e_{i_k}\ldots e_{i_p}
\otimes x_{i_k} b \biggr)
\end{align*}
for $a,b\in A$ and $1\le i_1<\ldots <i_p\le n$.

The following theorem is essentially due to R.~Nest \cite{Nest}
and L.~A.~Takhtajan \cite{Takht}. Although they considered only the case
where $A=C^\infty_\bq(\TT^n)$, their proofs remain valid for
$\cO_\bq^\reg((\CC^\times)^n)$ and $\cO_\bq^\hol((\CC^\times)^n)$ as well.
For $A=\cO_\bq^\reg((\CC^\times)^n)$, a similar result was obtained in
\cite{GG,Wambst_tor}.

\begin{theorem*}[R.~Nest \cite{Nest}, L.~A.~Takhtajan \cite{Takht}]
The complex \eqref{Kosz_tor} is exact.
Moreover, if $A$ is either $\cO_\bq^\hol((\CC^\times)^n)$
or $C^\infty_\bq(\TT^n)$, then \eqref{Kosz_tor} is admissible. Therefore
the complex $K_\bullet=(K_p,d_p)$ augmented by $\mu_A$ is a projective resolution
of $A$ in $A\bimod A$.
\end{theorem*}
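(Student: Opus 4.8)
The plan is to prove the theorem in two stages: first establish exactness of \eqref{Kosz_tor} as a complex of vector spaces (or topological vector spaces), and then, in the analytic/smooth cases, upgrade this to admissibility by producing an explicit continuous contracting homotopy. Exactness can be checked by a standard filtration/monomial argument. Writing $A_0=\cO_\bq^\reg((\CC^\times)^n)$, one observes that in all three cases $A$ has a topological basis consisting of the monomials $x^\alpha$, $\alpha\in\Z^n$, and the bimodule $K_p$ has a corresponding basis $x^\alpha\otimes e_{i_1}\cdots e_{i_p}\otimes x^\beta$. First I would reduce to a Koszul-type complex of $\CC[\Z^n]$-type modules: since each $x_{i_k}$ acts invertibly, the differential $d$ can be rewritten, after the change of variables $a\otimes\omega\otimes b \mapsto a\otimes\omega\otimes b$ absorbing the scalar factors $\prod q_{i_s i_k}$, as the Koszul differential for the regular sequence of commuting (up to the invertible scalars $q_{ij}$, which are units) operators ``left multiplication by $x_i$ minus right multiplication by $x_i$'' on $A\Ptens A$. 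Concretely, $A\Ptens A\cong \cO_\bq^\hol\bigl((\CC^\times)^{2n}\bigr)$ — or the algebraic, resp.\ smooth analogue — with the two sets of variables $x_i\otimes 1$ and $1\otimes x_i$, and under this identification $K_\bullet$ becomes the Koszul complex on the $n$ elements $t_i := x_i\otimes 1 - q$-twisted$\,\cdot\,1\otimes x_i$. These elements form a regular sequence cutting out the ``diagonal'' copy of $A$, which gives exactness.

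The second and main stage — building the contracting homotopy in the analytic and smooth cases — is the real obstacle, because exactness alone does not give admissibility: one needs the splitting maps to be \emph{continuous} for the relevant Fréchet topologies (the $\|\cdot\|_\rho$ seminorms for $\cO_\bq^\hol$, the $\|\cdot\|_k$ seminorms for $C^\infty_\bq$). Here I would work coordinate-wise in the monomial basis. For the augmented complex $A\xla{\mu_A}K_0\xla{d}\cdots\xla{d}K_n\xla{}0$ one can write down contracting operators $s_p\colon K_{p-1}\to K_p$ (and $s_0\colon A\to K_0$, e.g.\ $a\mapsto a\otimes 1$) by a telescoping formula: on a monomial $x^\alpha\otimes\omega\otimes x^\beta$, one ``transfers'' the variables $x_1,\dots,x_n$ one at a time from the right tensor factor to the left, inserting $e_i$'s as one goes, exactly as in the classical proof that the Koszul resolution of $\CC[x_1,\dots,x_n]$ over $\CC[x_1^{\pm1},\dots]$ splits. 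The formulas will involve the scalars $q_{ij}$, all of modulus $1$, so they do not affect any estimates. The key point to verify is that each such $s_p$, read off on basis elements, extends to a continuous map of the completed spaces: since $|q_{ij}|=1$, the coefficients appearing are unimodular, so $s_p$ sends a monomial to a bounded (in fact finite, for fixed degree) sum of monomials of the \emph{same} multidegree, whence $\|s_p(\text{monomial})\|_\rho\le C_n\,\rho^{|\alpha|+|\beta|}\cdot(\text{something polynomial in }|\alpha|,|\beta|)$; absorbing the polynomial factor into a slightly larger $\rho'$ (for the holomorphic case) or into finitely many higher seminorms $\|\cdot\|_{k'}$ (for the smooth case) shows $s_p$ is continuous. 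This is precisely where one uses that holomorphic and smooth function spaces ``behave well'' under $\Ptens$, as recalled in the Preliminaries.

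Assembling the pieces: the homotopy identity $ds_p+s_{p-1}d=\id$ (with the obvious boundary modifications at the ends, $\mu_A s_0=\id_A$ and $s_{n}d=\id$ on $K_n$) is a finite calculation on monomials that one checks using the definition of $d$ above and the relations $x_ix_j=q_{ij}x_jx_i$; the unimodular scalars bookkeep correctly by design of the $s_p$. This identity simultaneously reproves exactness and, in the Fréchet cases, gives admissibility once continuity of the $s_p$ is known. Finally, each $K_p=A\Ptens\bigwedge^p\CC^n\Ptens A$ is a finite direct sum of copies of the free bimodule $A\Ptens A$, hence projective in $A\bimod A$ (respectively $A\tens{\CC}A$ is free, hence projective, in the algebraic case); combined with admissibility of the augmented complex, this shows $K_\bullet\xra{\mu_A}A\to 0$ is a projective resolution of $A$ in $A\bimod A$, as claimed. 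The step I expect to require the most care is the continuity estimate for the homotopy $s_p$ — making sure the ``degree-raising'' in the seminorm indices is only by a fixed bounded amount so that one genuinely lands back in the same Fréchet space.
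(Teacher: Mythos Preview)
The paper does not contain a proof of this theorem: it is stated with attribution to Nest and Takhtajan (and, for the algebraic case, to Guccione--Guccione and Wambst), and no argument is given beyond those citations. Your sketch is sound and follows essentially the approach in those references: write down an explicit contracting homotopy by the standard ``telescoping'' transfer of powers of the $x_i$ from the right tensor factor to the left (inserting wedge factors $e_i$), and then verify that the resulting $\CC$-linear operators are continuous for the relevant families of seminorms. One small imprecision: the homotopy does \emph{not} send a basis monomial $x^\alpha\otimes\omega\otimes x^\beta$ to monomials of the same bidegree $(\alpha,\beta)$; rather, it produces a sum of roughly $|\beta|$ monomials whose \emph{total} degree $\alpha+\beta\in\Z^n$ is unchanged. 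This is exactly what you need, since the seminorms $\|\cdot\|_\rho$ and $\|\cdot\|_k$ depend only on the total degree, and --- as you correctly observe --- the polynomial growth in the number of terms is absorbed by passing from $\rho$ to any $\rho'>\rho$ (holomorphic case) or from $k$ to $k+1$ (smooth case). The concluding remark that each $K_p$ is a finite direct sum of free bimodules, hence projective, is also correct and completes the argument.
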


The resulting resolution $(K_\bullet,\mu_A)$ is called the {\em bimodule Koszul
resolution} of $A$.

The next proposition is proved by a direct computation.

\begin{prop}
Let $\alpha$ be the automorphism of $A$ uniquely determined by
\[
\alpha(x_j)=\prod_{i>j} q_{ij} x_j \quad (j=1,\ldots ,n).
\]
Then for each $X\in A\bimod A$ there exists a chain isomorphism
\begin{equation}
\label{Kosz_sym}
\h_{A-A}(K_\bullet,X)\cong ({_\alpha} X\tens{A-A} K_\bullet)[-n].
\end{equation}
\end{prop}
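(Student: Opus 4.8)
The plan is to realize both sides of \eqref{Kosz_sym} on one and the same underlying graded vector space, by applying in each degree the standard adjunction isomorphisms for free bimodules together with the Poincar\'e-type self-duality of $\bigwedge^{\bullet}\CC^{n}$, and then to check by a direct computation that the two differentials correspond under these identifications; the automorphism $\alpha$ will be forced on us as precisely the twist needed to reconcile the $q$-coefficients.

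First I would record two elementary facts, valid for any finite-dimensional vector space $V$ and any $X\in A\bimod A$. Since $A\Tens V\Tens A$ is then a free $A$-bimodule of finite rank (here $\Tens$ denotes the algebraic or the completed tensor product, as specified above, and for finite-dimensional $V$ the two coincide), the tensor-hom adjunction yields natural isomorphisms
\[
\h_{A-A}(A\Tens V\Tens A,\,X)\;\cong\;X\Tens V^{*},\qquad
X\tens{A-A}(A\Tens V\Tens A)\;\cong\;X\Tens V .
\]
Taking $V=\bigwedge^{p}\CC^{n}$, and using a nondegenerate pairing $\bigwedge^{p}\CC^{n}\Tens\bigwedge^{\,n-p}\CC^{n}\to\CC$ built from the top form $e_{1}\wedge\dots\wedge e_{n}$ --- with a normalization to be pinned down later --- to identify $\bigl(\bigwedge^{p}\CC^{n}\bigr)^{*}\cong\bigwedge^{\,n-p}\CC^{n}$, one obtains in each degree
\[
\h_{A-A}(K_{p},X)\;\cong\;X\Tens\bigwedge^{\,n-p}\CC^{n}\;\cong\;{}_{\alpha}X\tens{A-A}K_{n-p}.
\]
The shift $[-n]$ in \eqref{Kosz_sym} is exactly the one matching the $p$-th term on the left with the $(n-p)$-th term on the right, so the two complexes now have the same terms and only the differentials remain to be compared.

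Next I would transport both differentials onto this common model. On the left, the differential of $\h_{A-A}(K_{\bullet},X)$ is $\varphi\mapsto\varphi\circ d$; substituting the formula for the Koszul differential $d$ above and running it through the identifications, it becomes, on $X\Tens\bigwedge^{\bullet}\CC^{n}$, a sum over $k$ of terms combining the left multiplication $\xi\mapsto x_{i_{k}}\xi$, weighted by $\prod_{s<k}q_{i_{s}i_{k}}$, with the right multiplication $\xi\mapsto\xi x_{i_{k}}$, weighted by $\prod_{s>k}q_{i_{k}i_{s}}$, the exterior-algebra factor being the usual contraction with its Koszul sign. On the right, the differential of ${}_{\alpha}X\tens{A-A}K_{\bullet}$ is $\id\otimes d$; since $K_{p}$ is a free $A$-bimodule, this simply turns a generator $x_{i_{k}}$ occurring in the left (resp.\ the right) $A$-factor of $K_{p}$ into a one-sided multiplication on $X$, the crucial point being that, because of the $\alpha$-twist in ${}_{\alpha}X$, one of these multiplications is composed with $\alpha$, so that the contribution of $x_{i_{k}}$ acquires the extra scalar from $\alpha(x_{i_{k}})=\bigl(\prod_{i>i_{k}}q_{i\,i_{k}}\bigr)x_{i_{k}}$. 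Comparing the two formulas term by term --- using $q_{ij}=q_{ji}^{-1}$ to re-express $\prod_{s<k}q_{i_{s}i_{k}}$ and $\prod_{s>k}q_{i_{k}i_{s}}$ over the complementary index set produced by the pairing on $\bigwedge^{\bullet}\CC^{n}$ --- one checks that this extra scalar is precisely what makes the two $k$-th terms coincide, up to a single overall sign depending only on $n$ and the degree. Hence the degreewise isomorphisms assemble into a chain isomorphism, manifestly natural in $X$ --- which is exactly what will be needed to combine it with the bimodule Koszul resolution above and deduce $\VdB(n)$ for $A$.

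The argument carries no conceptual difficulty --- it is the ``direct computation'' referred to above --- and the only real work is the bookkeeping of the scalars $q_{ij}$ and of the Koszul signs: one must fix the correct normalization of the pairing on $\bigwedge^{\bullet}\CC^{n}$ and verify that the ``left'' $q$-product $\prod_{s<k}q_{i_{s}i_{k}}$ and the ``right'' $q$-product $\prod_{s>k}q_{i_{k}i_{s}}$ attached to a fixed generator $x_{i_{k}}$ in $d$, once carried to the complementary index set, differ by exactly $\prod_{i>i_{k}}q_{i\,i_{k}}$; this is in fact how one is led to the formula for $\alpha$ in the first place, and I expect this matching of $q$-coefficients to be the only delicate step. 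In the Fr\'echet cases $A=\cO_{\bq}^{\hol}((\CC^{\times})^{n})$ and $A=C^{\infty}_{\bq}(\TT^{n})$ no additional analytic difficulty arises, since each $K_{p}$ is a finitely generated free Fr\'echet $A$-bimodule, so that all the adjunction isomorphisms above are topological isomorphisms by the standard behaviour of $\h_{A-A}(-,X)$ and $(-)\ptens{A-A}(-)$ on free bimodules.
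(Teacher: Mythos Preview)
Your proposal is correct and follows exactly the approach the paper itself indicates: the paper's entire proof is the sentence ``The next proposition is proved by a direct computation,'' and what you have written is a careful outline of precisely that computation --- adjunction on free bimodules, Poincar\'e-type self-duality of $\bigwedge^\bullet\CC^n$, and a term-by-term comparison of the transported differentials that forces the scalar $\prod_{i>j}q_{ij}$ and hence the automorphism $\alpha$. Your remaining caveat about pinning down the normalization of the exterior-algebra pairing and verifying the $q$-bookkeeping is accurate but routine; there is no missing idea.
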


The above bimodule ${_\alpha} X$ is defined in a similar fashion
to the bimodule $A_\alpha$ (see above). The symbol
$[-n]$, as usual, denotes the right shift by
$n$ degrees.

By taking the cohomology of \eqref{Kosz_sym} and by using the obvious isomorphism
${_\alpha} X\cong A_\alpha\tens{A} X$, we obtain the following.

\begin{corollary}
\label{cor:tori_VdB}
The algebras $\cO_\bq^\reg((\CC^\times)^n)$, $\cO_\bq^\hol((\CC^\times)^n)$, and
$C^\infty_\bq(\TT^n)$ satisfy $\VdB(n)$ with twisting bimodule $A_\alpha$.
\end{corollary}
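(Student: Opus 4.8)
The plan is to feed the chain isomorphism \eqref{Kosz_sym} into the definition of Van den Bergh's condition. Since $(K_\bullet,\mu_A)$ is a projective bimodule resolution of $A$ (by the Nest--Takhtajan theorem cited above), the Hochschild cohomology $\cH^i(A,X)$ is computed as the cohomology of the complex $\h_{A-A}(K_\bullet,X)$, and the Hochschild homology $\cH_j(A,Y)$ is computed as the homology of $Y\tens{A-A}K_\bullet$. So the first step is simply to invoke these standard identifications.

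Next I would take cohomology on both sides of \eqref{Kosz_sym}. The left-hand side gives $\cH^i(A,X)$ by the preceding remark. For the right-hand side, the shift $[-n]$ converts the $i$th cohomology group of $({_\alpha}X\tens{A-A}K_\bullet)[-n]$ into the $(n-i)$th homology group of ${_\alpha}X\tens{A-A}K_\bullet$, which is $\cH_{n-i}(A,{_\alpha}X)$. Thus we obtain $\cH^i(A,X)\cong\cH_{n-i}(A,{_\alpha}X)$ for all $X\in A\bimod A$.

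The final step is to rewrite ${_\alpha}X$ in the form demanded by the definition of $\VdB(n)$, namely as $U\ptens{A}X$ for a fixed invertible bimodule $U$ independent of $X$. This is exactly the ``obvious isomorphism'' ${_\alpha}X\cong A_\alpha\tens{A}X$ (respectively $A_\alpha\ptens{A}X$ in the Fr\'echet cases) mentioned in the excerpt; one checks directly from the bimodule structures of $A_\alpha$ and ${_\alpha}X$ that the map $b\otimes x\mapsto b\cdot x$ is a bimodule isomorphism with inverse $x\mapsto 1\otimes x$. Since $A_\alpha$ is invertible with inverse $A_{\alpha^{-1}}$ (noted earlier in the discussion of invertible bimodules), taking $U=A_\alpha$ yields precisely condition \eqref{VdB}. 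Hence all three algebras satisfy $\VdB(n)$ with twisting bimodule $A_\alpha$.

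I do not anticipate a serious obstacle here: the Proposition containing \eqref{Kosz_sym} and the Nest--Takhtajan theorem already do the real work. The only points requiring a little care are the bookkeeping of the degree shift $[-n]$ (making sure cohomological degree $i$ really lands in homological degree $n-i$) and the verification that ${_\alpha}X\cong A_\alpha\ptens{A}X$ as bimodules, including checking that this isomorphism is topological in the two Fr\'echet cases so that it is legitimate in the category $A\bimod A$. Both are routine.
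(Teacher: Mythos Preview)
Your proposal is correct and follows exactly the approach indicated in the paper: take cohomology of the chain isomorphism \eqref{Kosz_sym} and then use the identification ${_\alpha}X\cong A_\alpha\tens{A}X$ (resp.\ $A_\alpha\ptens{A}X$) to put the result in the form required by $\VdB(n)$. The additional remarks you include (invertibility of $A_\alpha$, compatibility of the isomorphism with the Fr\'echet topology) are reasonable elaborations of points the paper leaves implicit.
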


Together with Proposition~\ref{prop:VdB_db}, this yields a bidimension formula
for the quantum tori.

\begin{corollary}
$\db\cO_\bq^\reg((\CC^\times)^n)
=\db\cO_\bq^\hol((\CC^\times)^n)=\db C^\infty_\bq(\TT^n)=n$.
\end{corollary}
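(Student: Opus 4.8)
The plan is to derive the equality directly from the two results just established. By Corollary~\ref{cor:tori_VdB}, each of the algebras $\cO_\bq^\reg((\CC^\times)^n)$, $\cO_\bq^\hol((\CC^\times)^n)$, and $C^\infty_\bq(\TT^n)$ satisfies $\VdB(n)$ (with twisting bimodule $A_\alpha$), and by Proposition~\ref{prop:VdB_db} any algebra satisfying $\VdB(n)$ has bidimension exactly $n$. Applying the proposition to each of the three algebras in turn yields the assertion. In the algebraic case one of course reads Proposition~\ref{prop:VdB_db} with $\ptens{A}$ replaced by $\tens{A}$ throughout, as indicated after its statement.

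It is worth recalling where the content of this short deduction actually resides. The inequality $\db A\le n$ is immediate from the existence of the bimodule Koszul resolution $(K_\bullet,\mu_A)$, which has length $n$; the genuinely nontrivial input here is the Nest--Takhtajan theorem, and in particular the fact that for the Fr\'echet algebras $\cO_\bq^\hol((\CC^\times)^n)$ and $C^\infty_\bq(\TT^n)$ this resolution is not merely exact but admissible. The reverse inequality $\db A\ge n$ is precisely the part of the proof of Proposition~\ref{prop:VdB_db} exhibiting a bimodule with nonzero top Hochschild cohomology: taking $X=A_{\alpha^{-1}}\Ptens A$ in \eqref{VdB} gives $\cH^n(A,A_{\alpha^{-1}}\Ptens A)\cong\cH_0(A,A\Ptens A)\cong\Tor_0^A(A,A)\cong A\ne 0$. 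Equivalently, one could avoid naming the Van den Bergh condition and simply feed the free rank-one bimodule $A\Ptens A$ into the chain isomorphism~\eqref{Kosz_sym}, which identifies $\cH^n(A,A\Ptens A)$ with $\cH_0(A,{_\alpha}(A\Ptens A))\cong A$; this is, however, the same computation in different clothing.

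I do not anticipate any real obstacle, since all the difficulty has been absorbed into the Nest--Takhtajan theorem and into the chain isomorphism~\eqref{Kosz_sym} on which Corollary~\ref{cor:tori_VdB} rests. The only point that deserves a moment's attention is that the twisting bimodule $A_\alpha$ must be invertible in each of the three module categories --- the purely algebraic one as well as the two categories of Fr\'echet bimodules. This is immediate from the general fact recorded earlier that $A_\alpha^{-1}=A_{\alpha^{-1}}$: the argument is purely formal, and in the Fr\'echet setting it goes through verbatim because $\alpha$, being determined by the formula $\alpha(x_j)=\prod_{i>j}q_{ij}x_j$, is a topological automorphism of $A$, so that $A_\alpha$ is genuinely a Fr\'echet $A$-bimodule.
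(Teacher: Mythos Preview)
Your proposal is correct and follows exactly the paper's own argument: the corollary is obtained immediately by combining Corollary~\ref{cor:tori_VdB} (the three algebras satisfy $\VdB(n)$) with Proposition~\ref{prop:VdB_db} ($\VdB(n)$ implies $\db A=n$). Your additional paragraphs unpacking where the two inequalities come from and checking that $A_\alpha$ is invertible in all three settings are accurate elaborations, but they go beyond what the paper records, which is simply the one-line deduction.
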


\section{The global dimension of the algebraic quantum torus}

Computing the global dimensions of quantum tori is
considerably more difficult than computing their bidimensions.
In the case of the algebraic quantum torus, this problem was solved
by J.~C.~McConnell and J.~J.~Pettit \cite{McConnPett}. A more transparent
solution was subsequently given by C.~J.~B.~Brookes.

\begin{theorem*}[C. J. B. Brookes \cite{Brookes}]
For a subgroup $H\subset\Z^n$, let $A_H$ denote the subalgebra of
$\cO_\bq^\reg((\CC^\times)^n)$ spanned by the monomials
$x^\alpha=x_1^{\alpha_1}\cdots x_n^{\alpha_n}\; (\alpha\in H)$.
Then
\[
\dg\cO_\bq^\reg((\CC^\times)^n)=\max\{\,\rk H : A_H\text{ \upshape is commutative }\}.
\]
\end{theorem*}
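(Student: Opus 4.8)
The plan is to prove both inequalities separately. For the upper bound $\dg\cO_\bq^\reg((\CC^\times)^n)\le\max\{\rk H:A_H\text{ commutative}\}$, I would argue by induction on $n$. Write $A=\cO_\bq^\reg((\CC^\times)^n)$ and pick the last variable $x_n$. Then $A$ is a crossed product (skew Laurent extension) $B*\Z$ of $B=\cO_{\bq'}^\reg((\CC^\times)^{n-1})$ by the automorphism induced by conjugation by $x_n$, where $\bq'$ is the principal $(n-1)\times(n-1)$ submatrix. For skew Laurent extensions there is the classical inequality $\dg(B*\Z)\le\dg B+1$, and in fact one has equality unless conjugation by $x_n$ is an "inner" perturbation in a suitable sense. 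The subtle point is that one does not want to lose a $+1$ at every step: the bound $\dg B+1$ is too weak to reach $\rk H$. So the key structural input must be that whenever adjoining $x_n$ genuinely raises the dimension by one, it also enlarges (or at worst preserves) the rank of a maximal commutative monomial subalgebra by one. This is where the group-theoretic combinatorics of the bicharacter $\alpha(\beta,\gamma)=\prod q_{ij}^{\beta_i\gamma_j}$ on $\Z^n$ enters: a monomial subalgebra $A_H$ is commutative precisely when $H$ is isotropic for the associated alternating bicharacter, so one is really computing the maximal rank of an isotropic subgroup of $(\Z^n,\alpha)$.

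For the lower bound, the natural approach is: if $H\subset\Z^n$ with $A_H$ commutative and $\rk H=r$, I would produce a module of projective dimension $r$. The commutative subalgebra $A_H$ is (a localization of) a Laurent polynomial ring in $r$ variables over a group algebra of the torsion part, so it has a residue module of projective dimension $r$ over $A_H$. One then induces up to $A$: the key is that $A$ is free as a (left or right) $A_H$-module — indeed it decomposes over coset representatives of $H$ in $\Z^n$ — so $A\otimes_{A_H}(-)$ is exact and preserves projectives, giving $\dh_A(A\otimes_{A_H} M)\ge\dh_{A_H} M=r$ once one checks the induced module is not "absorbed." Concretely, computing $\Ext^r_A$ of the induced module against itself (or against the induced module of the residue field) and showing it is nonzero via a change-of-rings spectral sequence $\Ext^\bullet_{A_H}\Rightarrow\Ext^\bullet_A$ should do it; the flatness of $A$ over $A_H$ collapses the relevant part of the spectral sequence.

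The main obstacle I expect is the inductive upper bound, specifically controlling when the skew extension strictly increases global dimension. One must show the dichotomy: either $\dg(B*\Z)=\dg B$ (the automorphism is, up to inner automorphism and a character twist, trivial on the relevant commutative part), or $\dg(B*\Z)=\dg B+1$ and simultaneously a maximal isotropic subgroup of $\Z^{n-1}$ extends to one of rank one larger in $\Z^n$. Making "the relevant commutative part" precise — presumably the center of a localization, or the kernel of the restricted bicharacter — and verifying that a maximal isotropic subgroup behaves correctly under the splitting $\Z^n=\Z^{n-1}\oplus\Z e_n$ is the technical heart. I would handle this by reducing, via the structure theory of alternating forms on free abelian groups (a symplectic-type basis), to the case where $\bq$ is in a normal form: a direct sum of "generic" rank-two blocks and a trivial (commutative) block. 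In that normal form the global dimension and the maximal isotropic rank are both transparent — each generic $2$-block contributes $1$ and each commutative summand of rank $k$ contributes $k$ — and one checks both equal $\max\{\rk H:A_H\text{ commutative}\}$ directly.
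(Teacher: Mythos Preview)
The paper does not contain a proof of this theorem: it is a survey, and the result is quoted from Brookes \cite{Brookes} (refining McConnell--Pettit \cite{McConnPett}) without argument. There is therefore no ``paper's own proof'' to compare against; I can only comment on your sketch on its merits.

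Your lower bound is essentially correct. Since $A$ is free as a left and as a right $A_H$-module (coset representatives of $H$ in $\Z^n$ give a basis), restriction from $A$ to $A_H$ preserves projectives and hence does not increase projective dimension; applying this to $A\tens{A_H}M$, whose restriction to $A_H$ is a direct sum of copies of $M$, gives $\dh_A(A\tens{A_H}M)\ge\dh_{A_H}M=r$ once $A_H$ is a commutative Laurent polynomial ring of Krull dimension $r$. No spectral sequence is needed.

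The upper bound has a genuine gap. The ``symplectic normal form'' you invoke does not exist here: the alternating bicharacter takes values in $\CC^\times$, not in a field, and alternating bicharacters on $\Z^n$ with values in an arbitrary abelian group do \emph{not} in general decompose as an orthogonal direct sum of rank-two ``hyperbolic'' blocks plus a trivial summand. For a concrete obstruction, take $n=3$ with $q_{12}=q$, $q_{13}=q'$, $q_{23}=1$ where $q,q'\in\CC^\times$ are multiplicatively independent: the bicharacter has trivial radical, so there is no ``trivial block'', yet $n=3$ is odd, so no decomposition into $2$-blocks is possible either. (The maximal isotropic rank is $2$, and indeed $\dg A=2$.) Even for $n=2$ your slogan ``generic $2$-block contributes $1$'' breaks down when $q$ is a primitive $m$th root of unity: then $m\Z^2$ is isotropic of rank $2$ and $\dg A=2$. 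Your induction via $\dg(B*\Z)\le\dg B+1$ is a sound starting point, but deciding when the skew Laurent extension strictly raises $\dg$ cannot be reduced to a normal form for $\bq$; in \cite{McConnPett,Brookes} this step goes instead through the structure of the centre and simplicity properties of suitable localizations.
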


It may happen that $A_H$ is commutative only in the extreme case where
$H$ is cyclic; in this case we have $\dg\cO_\bq((\CC^\times)^n)=1$.
As was shown by McConnell and Pettit, this case is in fact generic:

\begin{theorem*}[J.~C.~McConnell and J.~J.~Pettit \cite{McConnPett}]
Suppose that the multiplicative subgroup of $\CC^\times$ generated by
the $q_{ij}$'s has maximal possible rank, namely $\frac{q(q-1)}{2}$.
Then $\dg \cO_\bq^\reg((\CC^\times)^n)=1$.
\end{theorem*}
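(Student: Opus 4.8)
The plan is to read the result off from Brookes' theorem stated just above, for which one only needs to understand which subalgebras $A_H$ are commutative. Since $\dg\cO_\bq^\reg((\CC^\times)^n)=\max\{\rk H : A_H\text{ is commutative}\}$, and since $H=\Z e_1$ gives the commutative subalgebra $A_H=\CC[x_1^{\pm1}]$ of rank $1$, it suffices to prove that, under the genericity hypothesis, $A_H$ is noncommutative whenever $\rk H\ge 2$; this forces the maximum to equal exactly $1$.

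The computational input is the monomial multiplication rule. For $\alpha\in\Z^n$ write $x^\alpha=x_1^{\alpha_1}\cdots x_n^{\alpha_n}$; iterating the relations $x_ix_j=q_{ij}x_jx_i$ (and using $q_{ii}=1$, which is forced by invertibility of the $x_i$) one gets $x^\alpha x^\beta=\lambda(\alpha,\beta)\,x^{\alpha+\beta}$ for a scalar $\lambda(\alpha,\beta)\in\CC^\times$ that is a product of integer powers of the $q_{ij}$, and hence
\[
x^\alpha x^\beta=\Bigl(\prod_{1\le i<j\le n}q_{ij}^{\,\alpha_i\beta_j-\alpha_j\beta_i}\Bigr)x^\beta x^\alpha\qquad(\alpha,\beta\in\Z^n),
\]
where the relation $q_{ij}=q_{ji}^{-1}$ has been used to collapse the double product to one over $i<j$. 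The key observation is that the exponent of $q_{ij}$ here is precisely the $2\times2$ minor, on columns $i$ and $j$, of the $2\times n$ integer matrix with rows $\alpha$ and $\beta$. Thus $A_H$ is commutative if and only if $\prod_{i<j}q_{ij}^{\,\alpha_i\beta_j-\alpha_j\beta_i}=1$ for all $\alpha,\beta\in H$.

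Now suppose $\rk H\ge 2$ and choose $\Z$-linearly independent $\alpha,\beta\in H$. The matrix with rows $\alpha,\beta$ then has rank $2$, so at least one of its $2\times2$ minors $\alpha_i\beta_j-\alpha_j\beta_i$ is nonzero. By hypothesis the $q_{ij}$ with $i<j$ generate a free abelian subgroup of $\CC^\times$ of rank $\tfrac{n(n-1)}{2}$, i.e.\ they are multiplicatively independent, so a product $\prod_{i<j}q_{ij}^{\,m_{ij}}$ equals $1$ only if all $m_{ij}=0$. Since the minors do not all vanish, the commutation scalar above is $\ne 1$, hence $x^\alpha$ and $x^\beta$ do not commute and $A_H$ is noncommutative. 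Therefore commutativity of $A_H$ implies $\rk H\le 1$, and together with the rank-$1$ example and Brookes' theorem this yields $\dg\cO_\bq^\reg((\CC^\times)^n)=1$.

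There is no serious obstacle here; the only points requiring care are the bookkeeping in the monomial product and the two-step reduction ``$\alpha,\beta$ linearly independent over $\Z$ $\Rightarrow$ some $2\times2$ minor nonzero $\Rightarrow$ (by multiplicative independence of the $q_{ij}$) the commutation scalar is $\ne 1$.'' Once these are in place the conclusion is immediate from Brookes' formula.
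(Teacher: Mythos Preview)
Your argument is correct. The commutation formula
\[
x^\alpha x^\beta=\Bigl(\prod_{i<j}q_{ij}^{\,\alpha_i\beta_j-\alpha_j\beta_i}\Bigr)x^\beta x^\alpha
\]
is right, the identification of the exponents with $2\times 2$ minors is accurate, and the chain ``$\rk H\ge 2\Rightarrow$ some minor nonzero $\Rightarrow$ (by multiplicative independence of the $q_{ij}$, $i<j$) the scalar is $\ne 1$'' goes through. Together with the rank-$1$ example and Brookes' formula this gives $\dg\cO_\bq^\reg((\CC^\times)^n)=1$.

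As for comparison with the paper: note that this is a survey, and the paper does not supply its own proof of the McConnell--Pettit theorem; it merely states Brookes' formula and then records McConnell--Pettit's result as the generic case. Your derivation is precisely the deduction the paper's ordering invites, so in that sense you are following the paper's implicit approach. It is worth remarking, however, that McConnell and Pettit's original 1988 argument predates Brookes' formula and is direct: they analyse $\cO_\bq^\reg((\CC^\times)^n)$ as an iterated skew Laurent extension and show, under the genericity hypothesis, that it is a simple hereditary noetherian ring, whence global dimension $1$. Your route via Brookes is shorter once Brookes' theorem is available, but it is logically posterior to the result you are proving; the trade-off is that the McConnell--Pettit approach is self-contained and yields additional structural information (simplicity, hereditariness), whereas yours isolates exactly the combinatorial fact (multiplicative independence of the $q_{ij}$ forces noncommutativity of every $A_H$ with $\rk H\ge 2$) that feeds into Brookes' general formula.
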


This theorem implies, in particular, that $\dg\cA_q=1$ if $q$ is not a root of unity.
See also \cite[7.11.3]{MR} and references therein.

To complete the homological picture of the algebraic quantum torus,
let us observe that $\wdg \cO_\bq^\reg((\CC^\times)^n)=\dg \cO_\bq^\reg((\CC^\times)^n)$
and $\wdb \cO_\bq^\reg((\CC^\times)^n)=\db \cO_\bq^\reg((\CC^\times)^n)$
because $\cO_\bq^\reg((\CC^\times)^n)$ is noetherian.

\section{The global dimensions of the complex analytic and smooth quantum tori}

At the first glance, the three above versions of quantum $n$-tori
(i.e., the algebraic, complex analytic, and smooth quantum $n$-tori)
look very similar to each other. Indeed, we have already seen that
all of them have bimodule Koszul resolutions, satisfy Van den Bergh's condition
$\VdB(n)$, and have bidimension $n$.
It is natural to expect that their global dimensions should also be equal to each
other. This is not the case, however. A crucial difference between
the algebraic and locally convex (i.e., complex analytic and smooth) quantum
tori is that the latter are \emph{nuclear} Fr\'echet spaces.

Let us recall that nuclear locally convex spaces were introduced by
A.~Grothendieck in the early 1950ies. We will not give the definition
of nuclear spaces here, referring the reader to standard books on
topological vector spaces (see, e.g., \cite{Sch,Pietsch}).
The class of nuclear spaces is rather
large and contains, in particular, the spaces of smooth and holomorphic functions
on real and complex manifolds, as well as many spaces of distributions.
On the other hand, a normed space is nuclear only if it is finite-dimensional.

One of the main advantages of nuclear spaces is that they often behave in
much the same way as finite-dimensional spaces.
For example, all closed bounded subsets of a complete nuclear space are
compact. Another example: if $X$ is a nuclear Fr\'echet space,
and $Y$ is any complete locally convex space, then
the space $\cL(X,Y)$ of continuous linear maps from $X$ to $Y$
is isomorphic to the projective tensor product $X^*\Ptens Y$, where
$X^*$ is the strong dual of $X$. In the setting of linear algebra,
a similar assertion holds only in the case where one of the spaces
$X,Y$ is finite-dimensional.

To formulate our next result, let us recall that an
{\em Arens--Michael algebra} is a complete topological algebra
$A$ such that the topology on $A$ can be determined by a family
of submultiplicative seminorms (i.e., seminorms $\|\cdot\|$
satisfying $\| ab\|\le\| a\|\| b\|$ for all $a,b\in A$).
Equivalently, an Arens--Michael algebra is an inverse limit
of Banach algebras. The latter assertion is often referred to as
{\em the Arens--Michael decomposition theorem}.

Most ``natural'' topological algebras (although not all of them)
are Arens--Michael algebras. Clearly, each Banach algebra is an
Arens--Michael algebra. The algebras of continuous functions on topological
spaces and the algebras of smooth and holomorphic functions on real and complex
manifolds are also Arens--Michael algebras.
On the other hand, the algebra $\cE'(\R^n)$ of compactly supported distributions
on $\R^n$ is not an Arens--Michael algebra.
For our purposes, an important fact is that
$\cO_\bq^\hol((\CC^\times)^n)$ and
$C^\infty_\bq(\TT^n)$ are nuclear Fr\'echet--Arens--Michael algebras.

\begin{theorem}
Let $A$ be a nuclear Fr\'echet--Arens--Michael algebra satisfying $\VdB(n)$. Then
\[
\dg A=\db A=\wdg A=\wdb A=n.
\]
\end{theorem}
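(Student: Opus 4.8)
The plan is first to reduce to a single inequality. By Proposition~\ref{prop:VdB_db} we have $\db A = n$, and since $\wdg A \le \dg A \le \db A$ and $\wdg A \le \wdb A \le \db A$, all four numbers are at most $n$; as $\wdg A$ is a lower bound for the other three, it suffices to prove $\wdg A \ge n$, and for this it is enough to produce $Y \in A\lmod$ and $Z \in \rmod A$ with $\Tor_n^A(Z, Y) \ne 0$.

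Next I would extract a nonvanishing Hochschild homology group from $\VdB(n)$. Let $A_\alpha$ be a twisting bimodule. Taking $i = 0$ and $X = A$ in the defining isomorphism~\eqref{VdB}, and using $A_\alpha \ptens{A} A \cong A_\alpha$, gives
\[
\cH_n(A, A_\alpha) \cong \cH^0(A, A) = \h_{A-A}(A, A) = Z(A),
\]
the centre of $A$: a closed, hence Hausdorff, nuclear Fr\'echet subspace of $A$ which contains the identity. So $\cH_n(A, A_\alpha)$ is a nonzero Hausdorff nuclear Fr\'echet space. (This already yields $\wdb A = n$; the content of the theorem is to push the nonvanishing down from the enveloping algebra to one-sided modules.)

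The heart of the argument — and where nuclearity is used essentially — is the passage from the bimodule $A_\alpha$ to one-sided modules. For $X \in A\lmod$ nuclear Fr\'echet and $Y \in A\lmod$, nuclearity gives $\cL(X, Y) \cong X^* \Ptens Y$; endowing $\cL(X, Y)$ with the bimodule structure $(a \cdot f \cdot b)(x) = a\,f(bx)$ and $X^*$ with the right module structure $(\varphi \cdot a)(x) = \varphi(ax)$, this becomes an isomorphism of bimodules $\cL(X, Y) \cong Y \Ptens X^*$, and combined with $\cH_k(A, P \Ptens Q) \cong \Tor_k^A(Q, P)$ and the bar-resolution adjunction $\cH^k(A, \cL(X, Y)) \cong \Ext^k_A(X, Y)$ it yields
\[
\cH_k(A, \cL(X, Y)) \cong \Tor_k^A(X^*, Y), \qquad \cH^k(A, \cL(X, Y)) \cong \Ext^k_A(X, Y).
\]
The plan is to feed the nonvanishing of $\cH_n(A, A_\alpha)$ into the left-hand identity. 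I would embed $A_\alpha$ admissibly into a relatively injective (``cofree'') bimodule $J$; since $\db A = n$, $\cH^{n+1}(A, -) \equiv 0$, so the long exact homology sequence of $0 \to A_\alpha \to J \to J/A_\alpha \to 0$ forces $\cH_n(A, J) \ne 0$. Rewriting cofree bimodules through $\cL$ and $\Ptens$ by means of nuclearity (and the K\"unneth-type behaviour of $\Ptens$ of nuclear Fr\'echet spaces), and — crucially — organizing the resolutions so that the modules that appear are genuine \emph{Fr\'echet} modules rather than duals, one relocates the nonvanishing into $\Tor_n^A(Z, Y) \ne 0$ with $Z \in \rmod A$ and $Y \in A\lmod$. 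Hence $\wdh_A Y \ge n$, so $\wdg A \ge n$, and with the first paragraph $\dg A = \db A = \wdg A = \wdb A = n$.

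I expect this last conversion to be the main obstacle. The difficulty is that the strong dual of an infinite-dimensional nuclear Fr\'echet space is only a (complete) DF space, not a Fr\'echet space, so one cannot naively take $Y = A^*$ or $\cL(A, \cdot)$ as coefficients in the one-sided $\Tor$/$\Ext$; the real work is to arrange the identifications above so that the module witnessing weak dimension $n$ lies in $A\lmod$. The Arens--Michael hypothesis is needed to keep the relevant Koszul- and bar-type resolutions admissible (exactly as in the Nest--Takhtajan theorem quoted above for the quantum tori) and to ensure that $A^e = A \Ptens A^{\op}$ is again a nuclear Fr\'echet--Arens--Michael algebra, so that the derived-functor and Hausdorffness arguments are available on the enveloping side as well.
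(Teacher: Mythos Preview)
Your reduction in the first two paragraphs is correct: it really suffices to exhibit a one-sided Fr\'echet module of weak dimension $\ge n$, and the computation $\cH_n(A,A_\alpha)\cong Z(A)\ne 0$ is fine (though note the small slip in the third paragraph: to push nonvanishing along the short exact sequence $0\to A_\alpha\to J\to J/A_\alpha\to 0$ you need $\cH_{n+1}(A,-)=0$, not $\cH^{n+1}(A,-)=0$; the former does follow from $\VdB(n)$, but not from $\db A=n$ alone).

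The genuine gap is exactly the one you flag and then leave open. The identity $\cL(X,Y)\cong X^*\Ptens Y$ produces a right module $X^*$ that is a DF space, not a Fr\'echet space, so it is not an object of $\rmod A$ as defined here; you cannot feed it into $\Tor^A$ and conclude anything about $\wdg A$. Your proposed fix --- ``organizing the resolutions so that the modules that appear are genuine Fr\'echet modules'' --- is precisely the hard part, and nothing in your sketch indicates how to do it. Embedding $A_\alpha$ into a cofree bimodule makes this worse, not better: cofree bimodules are of the form $\cL(A\Ptens A,E)$, and writing them as $P\Ptens Q$ forces a dual onto one side. Your remark about the r\^ole of the Arens--Michael hypothesis is also off: it is not used to make bar- or Koszul-type complexes admissible (admissibility of the bar resolution is automatic, and there is no Koszul complex in this generality).

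The paper's route is quite different and uses the Arens--Michael hypothesis in an essential way. One first proves the general statement $\wdg A=\wdb A$ for nuclear Fr\'echet--Arens--Michael algebras with $\wdb A<\infty$ (Theorem~\ref{thm:wdg=wdb}), and Theorem~1 then follows immediately from $\wdb A=\db A=n$ and $\wdg A\le\dg A\le\db A$. The proof of Theorem~\ref{thm:wdg=wdb} does not try to dualize; instead it writes $A=\varprojlim A_k$ as an inverse limit of Banach algebras via the Arens--Michael decomposition, and combines Palamodov's results on the vanishing of $\varprojlim^1$ for countable systems of Fr\'echet spaces with factorization properties of nuclear operators (from \cite{Pir_injdim}) to manufacture a one-sided Fr\'echet module witnessing the correct weak dimension. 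In short, the obstacle you identified is real, and the paper circumvents it by an inverse-limit argument rather than by the duality route you attempt.
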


Together with Corollary \ref{cor:tori_VdB}, this yields the following.

\begin{corollary}
\label{cor:dgdb_qtori}
Let $A$ be either $C^\infty_\bq(\TT^n)$ or $\cO_\bq^\hol((\CC^\times)^n)$. Then
\[
\dg A=\db A=\wdg A=\wdb A=n.
\]
\end{corollary}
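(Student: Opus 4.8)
The plan is to combine three ingredients that are already essentially in hand. First, by Proposition~\ref{prop:VdB_db}, the hypothesis $\VdB(n)$ immediately gives $\db A = n$. Second, the general inequality $\dg A \le \db A$ (recalled in the excerpt) gives $\dg A \le n$, and likewise $\wdg A \le \wdb A \le \db A = n$. So all four numbers are at most $n$, and since $\wdg A \le \dg A$ and $\wdb A \le \db A$, everything is squeezed between the common upper bound $n$ and whatever lower bound I can extract. The entire content of the theorem is therefore the lower bound: I must produce a single left (or bimodule) module whose weak homological dimension is exactly $n$, which forces $\wdg A \ge n$ and hence all four quantities equal $n$. (Note $\db A = n$ is already unconditional from $\VdB(n)$, so the real work is pushing the global/weak-global dimensions up from the a priori bound $\dg A \ge \wdg A \ge \db A$... wait, that last inequality goes the wrong way; what one does have cheaply is only $\db A \ge \dg A$, so a genuine lower-bound argument for $\dg$ is needed.)

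The key step is to use nuclearity to "decompose" a bimodule computation into a one-sided one. Here is the mechanism I would exploit. Write $A^e = A \Ptens A^{\op}$ for the enveloping algebra; then $\db A = \dh_{A^e} A = n$ means there is some bimodule $X$ with $\cH^{n}(A,X) = \Ext^n_{A^e}(A,X) \ne 0$, and by $\VdB(n)$ we may even take $X$ of the form needed so that this is nonzero — indeed the proof of Proposition~\ref{prop:VdB_db} exhibits $\cH^n(A, U^{-1}\Ptens A)\cong A \ne 0$. Now the crucial point: because $A$ is a \emph{nuclear} Fréchet space, the functor $(-)\Ptens(-)$ is exact on short admissible sequences and commutes with the relevant inverse limits, so that for suitable coefficient bimodules of the "induced" form $Y \Ptens Z$ with $Y \in \rmod A$, $Z \in A\lmod$, one has a Künneth-type identification relating $A^e$-derived functors with $A$-derived functors on each side. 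Concretely, I would show that for $Y\in\rmod A$ and $Z\in A\lmod$,
\[
\Ext^k_{A^e}(A, Y\,\wh\otimes\, Z) \;\cong\; \bigoplus_{i+j=k}\text{(contributions from }\Ext^i_A\text{ and the weak dimension of }Z),
\]
or, more robustly, that the bimodule Koszul-type resolution $K_\bullet$ of $A$ tensored appropriately computes a one-sided $\Tor$. Then nonvanishing of $\cH^n(A,-)$ on an induced bimodule forces a nonvanishing one-sided $\Ext^i$ or $\Tor_j$ with $i \le n$ and $j \le n$ adding up to $n$, and by feeding in the "right" factors (e.g. taking $Z$ to be a module realizing $\wdh_A Z = n$, whose existence is exactly what I am trying to prove — so this must be bootstrapped carefully) one concludes $\wdg A \ge n$.

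The honest and cleaner route, which I expect is the one to carry out, is this: use the admissible bimodule Koszul resolution $K_\bullet$ (length $n$) to see that $A$ has $\dh_{A^e} A \le n$, then invoke a \emph{change-of-rings / nuclearity} argument to transfer the nonvanishing $\Ext^n_{A^e}(A,-)$ coming from $\VdB(n)$ to a nonvanishing $\Tor^A_n(\,\cdot\,,\,\cdot\,)$ with \emph{Hausdorff} lower $\Tor$s. Hausdorffness is where nuclearity is indispensable: for nuclear Fréchet spaces the projective tensor product of Fréchet spaces is again Fréchet and the relevant $\Tor$ spaces, computed from the Koszul complex, are closed ranges hence Hausdorff, because the Koszul differentials are continuous maps between nuclear Fréchet spaces with the finite-dimensionality-like properties that make images closed. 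This gives $\wdh_A(\text{some module}) = n$, whence $\wdg A \ge n$, and combined with the upper bounds $n = \db A \ge \dg A \ge \wdg A \ge n$ and $n = \db A \ge \wdb A \ge \wdg A = n$, all four equal $n$.

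The main obstacle, as I see it, is exactly the \emph{Hausdorffness/closed-range} verification for the one-sided $\Tor$ spaces: without it one only gets information about $\dg$ and $\db$, not about the \emph{weak} dimensions, and the whole sharpness of the statement (that even the weak global dimension jumps from $1$ in the algebraic case to $n$ here) rests on it. Establishing this cleanly requires showing that the one-sided Koszul-type complex obtained by "cutting" $K_\bullet$ in half has closed-range differentials — a statement that is true precisely because we are over nuclear Fréchet spaces and the "fiber" $\bigwedge^p\CC^n$ is finite-dimensional, so the differentials are, up to topological isomorphism, matrices over $A$ acting on free finite-rank modules, whose images are closed by the open mapping theorem applied to the exact algebraic truncations. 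The secondary (mostly bookkeeping) obstacle is assembling the correct change-of-rings isomorphism $\Ext_{A^e}(A,-) \cong (\text{one-sided derived functor})$ in the topological category, i.e. checking that the standard identifications $A^e\text{-}\mathsf{mod} \leftrightarrow A\bimod A$ and the induction/restriction adjunctions are compatible with admissibility and with $\Ptens$; this is routine given the nuclearity hypothesis but must be stated carefully since $\Ptens$ is not right exact in general — which is, after all, the reason the Hausdorff conditions appear in the definitions of $\wdh$ and $\wdb$ in the first place.
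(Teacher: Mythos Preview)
The paper's own proof of Corollary~\ref{cor:dgdb_qtori} is a single sentence: the two algebras are nuclear Fr\'echet--Arens--Michael algebras (this is noted in the text just before Theorem~1), and by Corollary~\ref{cor:tori_VdB} they satisfy $\VdB(n)$; now apply Theorem~1. What you have written is not a proof of the Corollary but an attempt to prove Theorem~1 itself, which the survey states without proof.

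As a sketch of Theorem~1, your high-level outline is reasonable --- $\db A=n$ comes free from Proposition~\ref{prop:VdB_db}, the upper bounds are automatic, and the whole content is a lower bound $\wdg A\ge n$ --- but the mechanism you propose for that lower bound has genuine gaps. Your K\"unneth route is, as you yourself flag, circular: you need a module with $\wdh=n$ to feed into the identification, which is exactly what you are after. Your ``cleaner route'' then rests on the claim that the one-sided Koszul differentials have closed range because the modules are finite-rank free and $A$ is nuclear, ``by the open mapping theorem applied to the exact algebraic truncations.'' This is not a valid argument: the open mapping theorem tells you that a continuous \emph{surjection} between Fr\'echet spaces is open, not that the image of an arbitrary continuous map between nuclear Fr\'echet spaces is closed. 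Nuclearity by itself does not force closed ranges; plenty of multiplication operators on nuclear function spaces have dense non-closed image. So the Hausdorffness step, which you correctly identify as the crux, is not actually established by what you wrote.

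For orientation, the paper indicates (in the remarks following Theorem~\ref{thm:wdg=wdb}) that the intended proof uses different machinery: the Arens--Michael decomposition $A\cong\varprojlim A_k$ into Banach algebras, Palamodov's results on the vanishing of $\varprojlim^1$ for countable spectra of Fr\'echet spaces, and factorization properties of nuclear operators. The passage from the bimodule side to the one-sided side is mediated by these inverse-limit techniques rather than by a direct closed-range argument on a single Koszul complex. If you want to pursue your line, you would need an independent reason why the relevant $\Tor$-spaces are Hausdorff, and that reason is not ``nuclearity $+$ open mapping''; it really does pass through the Banach approximants.
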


It is interesting to compare the latter result with the above theorems of
McConnell--Pettit and Brookes. We see that,
while the global dimension of the algebraic quantum $n$-torus
can be any number between $1$ and $n$, the global dimensions of the
smooth and complex analytic quantum $n$-tori are always
equal to $n$.

\section{Global dimension versus bidimension}

In this final section we discuss some general results on homological dimensions
of nuclear Fr\'echet algebras. We have already noted above that
for each Fr\'echet algebra $A$ one has $\dg A\le\db A$ and $\wdg A\le\wdb A$.
It is natural to ask whether any of these inequalities can be strict.
This problem was explicitly formulated by A.~Ya.~Helemskii
\cite{Hel_probl_1} and is still open.
In all concrete cases where the above dimensions are known we actually have
$\dg A=\db A$ and $\wdg A=\wdb A$.
It is interesting to compare this phenomenon with the classical homological
algebra, where algebras with $\dg A<\db A$ or $\wdg A<\wdb A$
exist in abundance. For instance, the algebra
$A=\CC(t)$ of rational functions satisfies
$\dg A=\wdg A=0$, because $A$ is a field and all $A$-modules are projective.
On the other hand, it is known that $\wdg A=\wdb A=1$.
Note that a similar example cannot be constructed within the framework
of Fr\'echet algebras due to the Gelfand--Mazur--\.Zelazko theorem \cite{Zel_metr},
which states that every Fr\'echet division algebra is isomorphic to $\CC$.
Another purely algebraic example is the algebraic quantum torus
$A=\cO_\bq^\reg((\CC^\times)^n)$; we have already noted above that
$\db A=\wdb A=n$, while $\dg A=\wdg A$ can be any integer between
$1$ and $n$. Corollary~\ref{cor:dgdb_qtori} shows that this example
apparently has no analogue in the Fr\'echet algebra context.

In view of the above-mentioned problem, it seems natural
to establish the equality $\dg A=\db A$ or $\wdg A=\wdb A$
if not for all Fr\'echet algebras (which is rather doubtful),
at least for some natural and sufficiently large class of them.
One such class is given by the next theorem.

\begin{theorem}
\label{thm:wdg=wdb}
Let $A$ be a nuclear Fr\'echet--Arens--Michael algebra. Suppose that $\wdb A<\infty$.
Then $\wdg A=\wdb A$.
\end{theorem}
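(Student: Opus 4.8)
The plan is to reduce the statement to a comparison of $\wdh$ of a single module with $\wdb$, using two ingredients: the behavior of flat resolutions under nuclear (completed) tensor products, and the Van den Bergh–type duality between Hochschild homology and $\Tor$. First I would recall the inequality $\wdg A \le \wdb A$, which is already noted in the excerpt, so the whole content lies in the reverse inequality $\wdb A \le \wdg A$. Set $d = \wdg A < \infty$ (finiteness is forced by the hypothesis $\wdb A < \infty$ together with $\wdg A \le \wdb A$). The goal is to produce, for every Fréchet $A$-bimodule $M$, a flat resolution of $A$ in $A\bimod A$ of length $\le d$, equivalently to show $\wdh_{A-A} A \le d$, equivalently $\cH_{d+1}(A,M) = 0$ and $\cH_d(A,M)$ is Hausdorff for all $M$.

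The key step is to exploit nuclearity to build bimodule flat resolutions from one-sided ones. Since $A$ is a nuclear Fréchet space, for any flat left module $F$ and any flat right module $G$ the completed tensor product $G \ptens{\CC} F$ is a flat $A$-bimodule; more to the point, if $P_\bullet \to A$ is a flat resolution of $A$ as a \emph{left} $A$-module of length $\le d$ (which exists because $\wdh_A A \le \wdg A = d$), then I would form the complex $A \ptens{\CC} P_\bullet$, or rather a resolution obtained by tensoring a left flat resolution of $A$ with a right flat resolution of $A$ and taking a suitable total complex. Nuclearity is what guarantees that the relevant short exact sequences of topological vector spaces stay admissible after $\ptens{\CC}$ (the completed projective tensor product of an admissible sequence with a fixed nuclear Fréchet space is again admissible), so the tensored complex is an \emph{admissible} flat resolution of $A\ptens{\CC} A$, and then, after base change along $\mu_A$, of $A$ in $A\bimod A$. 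Bounding its length by $d$ is the crux.

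The cleanest route, I expect, is: choose a flat resolution $0 \to P_d \to \cdots \to P_0 \to A \to 0$ of $A$ in $A\lmod$, and separately note that $A$ itself, viewed as a right $A$-module, is flat (it is free of rank one). Then $A \ptens{A} P_\bullet \cong P_\bullet$, but what we need is a bimodule resolution; so instead I would use the standard two-sided bar-type construction and compare: for any $M \in A\bimod A$ one has $\cH_k(A,M) = \Tor_k^{A\ptens{\CC}A^{\op}}(M, A)$, and a flat $A\bimod A$-resolution of $A$ of length $d$ can be manufactured by taking $P_\bullet$ as above and adjoining the right action via $A \ptens{\CC} (-)$. The Hausdorffness of $\cH_d(A,M)$ then follows because $\cH_d$ is computed as the top homology of a complex of length exactly $d$, hence is a quotient by a \emph{closed} image (the image of $\partial_{d+1}$ where $P_{d+1} = 0$ makes $\partial_d$ injective on a complement, or more carefully one invokes the standard fact that the top homology of a bounded complex of Fréchet spaces is Hausdorff once the next differential has closed range, which is automatic when that term vanishes).

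The main obstacle I anticipate is precisely the admissibility bookkeeping: turning a \emph{weak} (flat, not projective) one-sided resolution into a \emph{bimodule} flat resolution of the right length while keeping every syzygy sequence admissible. Over a general Fréchet algebra this can fail, and it is exactly here that "nuclear Fréchet–Arens–Michael" is used twice over — nuclearity to make $\ptens{\CC}$ exact on admissible sequences (so that flatness and admissibility are preserved under the external tensor product), and the Arens–Michael property to ensure enough flat bimodules of the required form are available and that the Hausdorffness conditions in the definition of $\wdb$ can actually be verified. A secondary technical point is checking that the flat dimension does not jump when passing from $A\lmod$ to $A\bimod A$; I would handle this by the symmetry $A\bimod A \cong (A\ptens{\CC}A^{\op})\lmod$ together with a Künneth-type argument, valid because all tensor factors are nuclear Fréchet, so that $\wdh_{A\ptens{\CC}A^{\op}} A \le \wdh_A A + \wdh_{A^{\op}} A$, and the right-hand side is $\le 2d$ a priori but in fact $\le d$ because one of the two factors ($A$ as a one-sided module over itself) is free. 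Assembling these gives $\wdb A = \wdh_{A-A} A \le d = \wdg A$, and with the reverse inequality already in hand, the equality follows.
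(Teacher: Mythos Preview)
Your proposal has a genuine gap at its core. The decisive step is the K\"unneth-type inequality
\[
\wdh_{A\Ptens A^{\op}} A \;\le\; \wdh_A A + \wdh_{A^{\op}} A,
\]
but such an inequality applies only to bimodules of the form $M\Ptens N$ (an external tensor product of a left module with a right module), not to the diagonal bimodule $A$. The bimodule $A$ is the \emph{quotient} $A\Ptens A/\ker\mu_A$, not a tensor product of one-sided modules, so there is no way to feed it into a K\"unneth argument. In fact your inequality, if valid, would be far too strong: since $A$ is free of rank one as a left (or right) module over itself, both summands on the right vanish, and you would conclude $\wdb A=0$ for every nuclear Fr\'echet--Arens--Michael algebra. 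That contradicts, e.g., $\wdb C^\infty(\TT^n)=n$. The earlier variants you sketch have the same defect: a left flat resolution of $A$ has length $0$, and ``tensoring a left flat resolution with a right flat resolution and base-changing along $\mu_A$'' produces a resolution of $A\Ptens A$, not of the diagonal bimodule $A$; there is no base change that converts the former into the latter while preserving flatness and length.

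The paper's own route is of an entirely different nature and does not attempt to manufacture a short bimodule resolution from one-sided data. It proceeds via the Arens--Michael decomposition $A\cong\varprojlim A_k$ into Banach algebras, Palamodov's vanishing results for the derived inverse limit $\varprojlim\nolimits^{1}$ on countable systems of Fr\'echet spaces, and a factorization property of nuclear operators (from \cite{Pir_injdim}). None of these ingredients appears in your plan, and the Arens--Michael hypothesis is used in a specific way (through the inverse-limit presentation), not merely to supply ``enough flat bimodules''. Your invocation of nuclearity---that $\Ptens$ with a fixed nuclear Fr\'echet space preserves admissible sequences---is correct but does not by itself bridge the one-sided/two-sided gap; the missing idea is precisely how to detect $\wdb A$ by a \emph{one-sided} $\Tor$, and that is where the inverse-limit and nuclear-factorization machinery enters.
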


The proof is based on the above-mentioned Arens--Michael decomposition theorem,
on some results of V.~P.~Palamodov \cite{Pal_proj1}
on the vanishing of the derived inverse limit functor $\varprojlim^1$,
and on the author's results \cite{Pir_injdim}
on factorization of nuclear operators.

It is natural to ask whether Theorem~\ref{thm:wdg=wdb} can be extended
to the ``strong'' dimensions $\dg$ and $\db$.
Unfortunately, so far we have only an essentially weaker result
on $\dg$ and $\db$. Let us say that a Fr\'echet algebra
$A$ is {\em of finite type} if $A$ has a projective resolution in
$A\bimod A$ consisting of finitely generated bimodules.
For example, the algebra $C^\infty(D)$ of smooth functions on an open set
$D\subset\R^n$ and the algebra $\cO^\hol(D)$ of holomorphic functions
on a polydomain $D\subset\CC^n$ are of finite type,
because they have bimodule Koszul resolutions (see Example~\ref{example:Taylor}).
As was shown by A.~Connes \cite{NCDG}, the algebra
$C^\infty(M)$ of smooth functions on a compact manifold $M$ is of finite type
provided that $M$ has a nowhere vanishing vector field.
The algebra $\cO^\hol(V)$ of holomorphic functions on a nonsingular
affine algebraic variety $V$ is also of finite type \cite{Pir_Mall}.
Nest--Takhtajan's theorem (see above) implies that
$C^\infty_\bq(\TT^n)$ and $\cO_\bq^\hol((\CC^\times)^n)$ are of finite type.
For more examples, see \cite{Pir_qfree}.

\begin{theorem}
\label{thm:dg=db}
Let $A$ be a nuclear Fr\'echet--Arens--Michael algebra of finite type.
Suppose that $\db A<\infty$.
Then $\dg A\le\db A\le\dg A+1$.
\end{theorem}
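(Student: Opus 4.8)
The plan is to prove the two inequalities separately. The lower bound $\dg A\le\db A$ holds for every Fr\'echet algebra and was already recorded in the preliminaries, so the whole difficulty lies in the upper bound $\db A\le\dg A+1$. My strategy is to route this bound through the \emph{weak} dimensions, where Theorem~\ref{thm:wdg=wdb} is available, and to pay the single extra unit precisely at the passage from flat to projective.

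First I would reduce the problem to a weak statement. Since $\db A<\infty$ and $\wdb A\le\db A$, we have $\wdb A<\infty$, so Theorem~\ref{thm:wdg=wdb} applies and gives $\wdg A=\wdb A$. Combining this with the general inequality $\wdg A\le\dg A$, the desired bound would follow once we establish
\[
\db A\le\wdb A+1,
\]
because then $\db A\le\wdb A+1=\wdg A+1\le\dg A+1$. Thus the task becomes to compare the projective and the weak (flat) homological dimensions of the single bimodule $A$, i.e. of $A$ regarded as a module over $A\Ptens A^\op$, and to show that they differ by at most one.

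For this I would exploit the finite type hypothesis. Choose a projective resolution $P_\bullet\to A$ in $A\bimod A$ with all $P_i$ finitely generated, set $w=\wdb A=\wdh_{A-A}A$, and consider the $w$-th syzygy $\Omega=\operatorname{im}(P_w\to P_{w-1})$. As a quotient of $P_w$ it is again finitely generated, and because the weak bidimension equals $w$ the module $\Omega$ is flat. Dimension shifting along the (admissible) truncated resolution then yields $\db A=\dh_{A-A}A\le w+\dh_{A-A}\Omega$, so it suffices to prove $\dh_{A-A}\Omega\le 1$; that is, that a finitely generated flat Fr\'echet bimodule has projective dimension at most one over $A\Ptens A^\op$.

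The hard part will be exactly this last lemma. One cannot hope for ``finitely generated flat implies projective'' in the topological category --- were that true the whole chain would collapse to $\db A=\dg A$, with no ``$+1$'' --- so the point is to lift projectivity just one step. I would cover $\Omega$ by a finitely generated free bimodule via an admissible epimorphism, forming $0\to K\to F\to\Omega\to 0$, observe that $K$ is again finitely generated and flat, and then prove $K$ projective. Here I expect to need the same analytic machinery that underlies Theorem~\ref{thm:wdg=wdb}: the Arens--Michael decomposition of $A\Ptens A^\op$ as an inverse limit of Banach algebras, Palamodov's vanishing criterion for $\varprojlim^1$ to assemble a global splitting out of the Banach-level ones, and the factorization properties of nuclear operators to produce the required continuous liftings. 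Making the splittings coherent across the inverse system, so that they glue to a continuous $A\Ptens A^\op$-module splitting of $F\to\Omega$, is the main obstacle, and it is precisely what forces the restriction to \emph{nuclear} algebras of \emph{finite type}.
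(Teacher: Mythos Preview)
The paper is a survey and states Theorem~\ref{thm:dg=db} without proof, so there is no argument in the text to compare your proposal against; I can only assess the proposal on its own terms.

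Your overall reduction is sensible: routing the upper bound through Theorem~\ref{thm:wdg=wdb} to reach $\db A\le\wdb A+1$, and then reducing that to showing the $w$-th syzygy $\Omega$ in a finite-type bimodule resolution has $\dh_{A-A}\Omega\le 1$, is a coherent plan, and the analytic ingredients you name (Arens--Michael decomposition, Palamodov's $\varprojlim^1$ vanishing, nuclear factorization) are exactly those the paper cites for Theorem~\ref{thm:wdg=wdb}. Two concrete points need repair, however. First, in the short exact sequence $0\to K\to F\to\Omega\to 0$ with $F$ an \emph{arbitrary} finitely generated free bimodule, your claim that $K$ is again finitely generated is unjustified: kernels of morphisms between finitely generated Fr\'echet modules need not be finitely generated. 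The fix is to stay inside the given finite-type resolution and take $K=\Omega_{w+1}$, the next syzygy, which is a quotient of $P_{w+1}$ and hence finitely generated. Second, your final paragraph is internally inconsistent: you set the goal as proving $K$ projective, but then describe the obstacle as gluing Banach-level splittings into ``a continuous $A\Ptens A^{\op}$-module splitting of $F\to\Omega$''. Such a bimodule splitting would make $\Omega$ itself projective and collapse the estimate to $\db A=\wdb A$, with no ``$+1$''---precisely what you earlier ruled out. As it stands, the decisive step (why the finitely generated flat kernel is projective) is not argued; you have identified the right toolbox but not explained how it produces projectivity of $K$ rather than of $\Omega$.
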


Theorems \ref{thm:wdg=wdb} and \ref{thm:dg=db} may be compared with
the situation in the classical homological algebra, where similar results
seem to exist only for finite-dimensional algebras \cite{Eilenberg,Ausl,Happel}.
Thus the above theorems may be viewed as illustrations of
the well-known principle saying that nuclear spaces often behave
in much the same way as finite-dimensional spaces.

\end{document}